\newif\iffinal
\LetLtxMacro\todonotestodo\todo
\renewcommand{\todo}[2][]{\todonotestodo[backgroundcolor=yellow, #1]{TODO: {#2}}}
\iffinal\renewcommand{\todo}[2][]{}\fi
\renewcommand{\gitMark}{\jobname\,\textbullet{}\,\gitFirstTagDescribe\,\textbullet{}\,\gitAuthorName,\,\gitAuthorIsoDate}
\newtheorem*{rep@theorem}{\rep@title}
\newcommand{\newreptheorem}[2]{%
\newenvironment{rep#1}[1]{%
 \def\rep@title{#2 \ref{##1}}%
 \begin{rep@theorem}}%
 {\end{rep@theorem}}}
\DeclareMathOperator{\fixdiv}{d}
\newtheorem{theorem}{Theorem}
\newtheorem{proposition}{Proposition}[section]
\newtheorem{corollary}[proposition]{Corollary}
\newtheorem{fact}[proposition]{Fact}
\newtheorem{lemma}[proposition]{Lemma}
\theoremstyle{definition}
\newtheorem{definition}[proposition]{Definition}
\newtheorem{notation}[proposition]{Notation}
\newtheorem{convention}[proposition]{Convention}
\newtheorem{remark}[proposition]{Remark}
\DeclareMathOperator{\lcm}{lcm}
\DeclareMathOperator{\Int}{Int}
\DeclareMathOperator{\genfac}{fac}
\DeclarePairedDelimiter{\norm}{\lVert}{\rVert}
\let\assoc=\approx
\newcommand{\IntR}{\Int(R)}
\newcommand{\Rhat}{{\hat R}}
\newcommand{\Mhat}{{\hat M}}
\newcommand{\N}{{\mathbb N}}
\newcommand{\QQ}{{\mathbb Q}}
\newcommand{\Z}{{\mathbb Z}}
\newcommand{\val}{v}
\renewcommand{\vec}[1]{\mathbf{#1}}
\newcommand{\bfE}{\mathbf{E}}
\newcommand{\one}{\mathbf{1}}
\newcommand{\calP}{\mathcal{P}}
\def\C{{\mathcal{B}}}
\def\P{{\mathcal{C}}}
\def\S{{\mathcal{S}}}
\def\PS{{(\P,S)}}
\def\x{{\mathbf{x}}}
\def\F#1{\mathcal{P}(#1)}
\def\Q#1{Q(#1)}
\def\R#1{\mathcal{R}(#1)}
\def\V#1{V(#1)}
\def\M#1{{\mathcal{M}}(#1)}
\title%
[Split absolutely irreducible integer-valued polynomials]
{%
  Split absolutely irreducible integer-valued polynomials over
  discrete valuation domains %
}
\keywords{non-unique factorization, arithmetic of Prüfer rings,
  absolutely irreducible, non-absolutely irreducible, integer-valued
  polynomials, prime separation, split polynomials}
\subjclass[2010]{13A05; 11S05, 11R09, 13B25, 13F20, 11C08}
\author{Sophie Frisch}
\address[Sophie Frisch]{
  Institute of Analysis and Number Theory \\
  Kopernikusgasse 24/II \\
  8010 Graz \\
  Austria
}
\email{\href{mailto:frisch@math.tugraz.at}{frisch@math.tugraz.at}}
\thanks{S.~Frisch is supported by the Austrian Science Fund (FWF): P~30934}
\author{Sarah Nakato}
\address[Sarah Nakato]{
  Institute of Analysis and Number Theory \\
  Kopernikusgasse 24/II \\
  8010 Graz \\
  Austria
}
\email{\href{mailto:snakato@tugraz.at}{snakato@tugraz.at}}
\thanks{S.~Nakato is supported by the Austrian Science Fund (FWF): P~30934}
\author{Roswitha Rissner}
\address[Roswitha Rissner]{
  Department of Mathematics \\
  University of Klagenfurt\\
  Universitätsstraße 65-67\\
  9020 Klagenfurt am Wörthersee\\
  Austria}
\email{\href{mailto:roswitha.rissner@aau.at}{roswitha.rissner@aau.at}}
\thanks{R.~Rissner is supported by the Austrian Science Fund (FWF): P~28466,
  DOC~78}
\begin{document}

\begin{abstract}
  Regarding non-unique factorization of integer-valued polynomials
  over a discrete valuation domain $(R,M)$ with finite residue field, it is
  known that there exist absolutely irreducible elements, that is,
  irreducible elements all of whose powers factor uniquely, and
  non-absolutely irreducible elements.

  We completely and constructively characterize the absolutely
  irreducible elements among split integer-valued polynomials. They
  correspond bijectively to finite sets, which we call
  \emph{balanced}, characterized by a combinatorial property regarding
  the distribution of their elements among residue classes of powers
  of $M$. For each such balanced set as the set of roots of a split
  polynomial, there exists a unique vector of multiplicities and a
  unique constant so that the corresponding product of monic linear
  factors times the constant is an absolutely irreducible
  integer-valued polynomial.  This also yields sufficient criteria for
  integer-valued polynomials over Dedekind domains to be absolutely
  irreducible.
\end{abstract}

\maketitle


\section{Introduction}
\label{section:intro}
In rings with non-unique factorization into irreducibles there usually
exist irreducible elements some of whose powers have factorizations
into irreducibles other than the obvious one. They are called
non-absolutely irreducible (cf.~Definition~\ref{def:abs-irred}). To
understand patterns of non-unique factorizations it is important to
identify the non-absolutely irreducible elements.  In rings of
integer-valued polynomials
\begin{equation*}
  \IntR = \{f \in K[x] \mid f(R)\subseteq R\},
\end{equation*}
where $R$ is a domain with quotient field $K$, examples of both
absolutely irreducible and non-absolutely irreducible elements have
been given, for instance by the second author of this
paper~\cite{NS2020:Non-Abs}. Note that absolutely irreducible elements
are also called strong atoms, for instance, by~Chapman and
Krause~\cite{ChKr2012:Atomic-decay}, or completely irreducible,
by~Kaczorowski~\cite{Kaczorowski:1981:compl-irred}.  Some of
Nakato's examples concern polynomials that split over $K$.

In this paper, we completely characterize split absolutely irreducible
integer-valued polynomials over a discrete valuation domain $(R,M)$
with finite residue field. It suffices to consider polynomials whose
roots are elements of $R$, as the only split absolutely irreducible
integer-valued polynomials with roots in $K\setminus R$ are linear
polynomials and hence easily understood. We, therefore, consider
polynomials of the form
\begin{equation}\label{eq:genpoly}
  F=c^{-1}f \text{ with } f=\prod_{s\in S} (x-s)^{m_s}
\end{equation}
where $S\subseteq R$ is a finite set and $c\in R$. We show in
Theorem~\ref{characterization_thm} that $F$ is absolutely irreducible
in $\Int(R)$ if and only if $c$ is a generator of the fixed divisor of
$f$, $S$ is a balanced set, and $f$ the equalizing polynomial of
$S$. Balanced refers to the distribution of $S$ among residue classes
of the powers of $M$ (Definition~\ref{definition:balanced}) and the
equalizing polynomial results from a particular choice of
multiplicities $(m_s)_{s\in S}$
(Definition~\ref{definition:equ-poly}), and the fixed divisor is the
ideal generated by the image of $f$ (Definition~\ref{definition:fixdiv}).

So far the study of non-unique factorization has focused on
Krull monoids, which are characterized by having a so-called
``divisor theory''. Among integral domains, Krull domains are
exactly the ones whose multiplicative monoids $D\setminus \{0\}$ are
Krull monoids, cf.~\cite{GeHa2006:NUFactorizations}.

The ring $\IntR$ of integer-valued polynomials over a discrete
valuation domain (with finite residue field) is known to be Prüfer,
but not Krull, cf.~\cite{CahenChabertFrisch:2000:interpolation,
  Loper:1998:intdpruefer}. What is Krull, however, is the monadic
submonoid generated by a single polynomial $f$, that is,
\begin{equation*}
  [\![f]\!] = \{g \in \Int(D) \mid g \text{ divides } f^n
  \text{ for some } n\in \N \}.
\end{equation*}
This submonoid contains all the information about all
factorizations of the powers of a single polynomial
$f$. (Reinhart~\cite{Reinhart:2014:monadic} proved that the monadic
submonoid of $\Int(R)$ is Krull for factorial domains $R$ and
Frisch~\cite{Frisch:2014:monadic} extended this result to Krull
domains $R$.)

Earlier work on factorization-theoretic properties in rings of
integer-valued polynomials can be found in the work of Cahen and
Chabert~\cite{Cahen-Chabert:1995:Elasticity-for-IVP}, Anderson, Cahen,
Chapman and Smith~\cite{AndersonS-Cahen-Chapman-Smith:1995:fac-iv} as
well as Chapman and
McClain~\cite{Chapman-McClain:2005:irred-iv-poly}. For a thorough
introduction into the theory of integer-valued polynomials we refer to the
textbook of Cahen and Chabert~\cite{Cahen-Chabert:1997:book} and their
more recent survey~\cite{Cahen-Chabert:2016:survey}.

Returning to absolute irreducibility, it is immediately seen that
$f$ is absolutely irreducible if and only if the monadic submonoid
$[\![f]\!]$ of $f$ is factorial, which is again equivalent to the
divisor class group of $[\![f]\!]$ being trivial. Our results,
therefore, add to the insight into the monadic submonoids of rings of
integer-valued polynomials.

Moreover, absolutely irreducible elements play an important role when
it comes to the construction of elements with a certain factorization
behaviour. For rings of integer-valued polynomials, only little is
known so far about absolutely irreducible polynomials.

Frisch and Nakato~\cite{FrNa2019:Graphtheoretic} give a
graph-theoretic criterion for an integer-valued polynomial with
square-free denominator over a principal ideal domain to be absolutely
irreducible. For polynomials with squares appearing in the
denominator, the graph-theoretic condition is shown to be sufficient,
but not necessary.  One consequence of their result is that the
binomial polynomial $\binom{x}{p}$ is absolutely irreducible in
$\Int(\Z)$ for a prime number $p$. The latter has been shown before by
McClain~\cite{McClain:2004:honorsthesis}.

In the general case $\binom{x}{n}$ their graph-theoretic argument is
not applicable due to the nature of the denominator. Recently, Rissner
and Windisch~\cite{Rissner-Windisch:2021:binom} verified that the
binomial polynomials $\binom{x}{n}$ are indeed absolutely irreducible
in $\Int(\Z)$ for any $n\in \N$. For the special case where $n=p^k$ is
a prime power, Corollary~\ref{corollary:gen-binom} below serves as an
alternative proof.

Here is a brief outline of our strategy for characterizing split
absolutely irreducible polynomials.  We set aside those that have a
root in $K\setminus R$.  (We will later show that they are all
linear, see Corollary~\ref{corollary:root-set-K-R}). From then on we
only consider polynomials that split over $R$. We define balanced sets
(Definition~\ref{definition:balanced}) and establish a host of somewhat
technical facts about them
(Sections~\ref{section:partition},~\ref{subsection:small-fat-sets}). These facts
allow us to find for each balanced set $S$ a vector of multiplicities
$(m_s)_{s\in S}$ (Proposition~\ref{Bpositive_lemma}) such that
$\prod_{s\in S}(x-s)^{m_s}$ multiplied by an appropriate constant is
absolutely irreducible (Theorem~\ref{theorem:equpoly-exists}). We show
that this vector of multiplicities is unique because a certain type of
matrix is non-singular
(Proposition~\ref{proposition:detneq0}). Finally, in
Theorem~\ref{characterization_thm}, we prove that all split absolutely
irreducible polynomials with roots in $R$ are of this kind by showing
that the set of roots always is a balanced set.

\section{Preliminaries}
\label{sec:preliminaries}

\begin{convention}\label{conventionRD}
  Throughout this paper, unless explicitly stated otherwise, $D$ is
  always a domain with quotient field $K$ and $(R,M)$ always denotes a
  discrete valuation domain with quotient field $K$ and finite residue
  field.
\end{convention}

\subsection{Integer-valued polynomials}\label{subsection:prel-iv}
Note that we only introduce the basic notions concerning
integer-valued polynomials required in this work. A detailed treatment
of the theory of integer-valued polynomials can be found in the
textbook of Cahen and Chabert~\cite{Cahen-Chabert:1997:book} and their
recent survey~\cite{Cahen-Chabert:2016:survey}.

\begin{definition}\label{definition:fixdiv}
  Let $D$ be a domain with quotient field $K$. The \emph{ring of
    integer-valued polynomials on $D$} is defined as
  \begin{equation*}
    \Int(D) = \{F \in K[x] \mid F(D) \subseteq D\}.
  \end{equation*}

  For $F\in \Int(D)$, the \emph{fixed divisor} of $F$ is the ideal
  \begin{equation*}
    \fixdiv(F) = (F(a) \mid a \in D)
  \end{equation*}
  of $D$ generated by the elements $F(a)$ with $a\in D$.

  The polynomial $F$ is said to be \emph{image-primitive} if
  $\fixdiv(F) = D$.
\end{definition}

\begin{remark}
  Let $f\in D[x]$ and $b\in D$. Then
  \begin{equation*}
    \frac{f}{b}\in \Int(D) \;\Longleftrightarrow\; \fixdiv(f) \subseteq bD
  \end{equation*}
\end{remark}

\begin{remark}\label{remark:intd-fixdiv}
  Let $D$ be a domain and $g\in D[x]$. If $P$ is a prime ideal of $D$
  such that $\fixdiv(g) \subseteq P$, then $g\in P[x]$ or
  $\norm{P} = |D/P| \le \deg(g)$.
\end{remark}

\begin{remark}\label{remark:irred-img-prim}
  Let $D$ be a principal ideal domain with quotient field $K$.
  \begin{enumerate}
  \item All fixed divisors are principal ideals. Below, the notation
    $\fixdiv_f$ stands for an arbitrary but fixed generator of the
    fixed divisor of $f$.
  \item There is for every $f\in D[x]$ an image-primitive polynomial
    $F\in\Int(D)$ (unique up to multiplication by units of $D$)
    associated to $f$ in $K[x]$, namely, $F = \fixdiv_{f}^{-1}f$,
    where $\fixdiv_{f}$ is a generator of the fixed divisor of $f$.
  \item An irreducible polynomial in $\Int(D)$ is necessarily
    image-primitive.
  \item In the equivalence class of $f$ with respect to multiplication
    by non-zero constants in $K$, it is only the image-primitive
    elements that have a chance of being irreducible, or absolutely
    irreducible.  We investigate for which $f\in D[x]$ an
    image-primitive $F\in\Int(D)$ associated to $f$ in $K[x]$ is
    absolutely irreducible.
  \end{enumerate}
\end{remark}

Next, we remind the reader of the prime spectrum of $\IntR$ where $R$
is a discrete valuation domain and introduce some notation.

\begin{notation}\label{notation:primes}
  Let $(R,M)$ be as in Convention~\ref{conventionRD}. Further, let
  $\Rhat$ denote the $M$-adic completion of $R$ and $\Mhat$ its
  maximal ideal.
  \begin{enumerate}
  \item For a monic, irreducible polynomial $q\in K[x]$, we write
    \[
      Q_q = \IntR \cap qK[x].
    \]
  \item For $\alpha\in \Rhat$, we write
    \[
      M_\alpha = \{G\in \IntR \mid G(\alpha)\in \Mhat \}.
    \]
    Here, $G(\alpha)$ is defined by extending the $M$-adically
    uniformly continuous function $G\colon R\rightarrow R$ uniquely to
    $\Rhat$.
  \item Note that, for $a\in R$,
    \begin{equation*}
      M_a = \{G\in \IntR \mid G(a)\in M \}
    \end{equation*}
    because  $\Mhat \cap K = M$.
  \item For $F\in \IntR$, we write
    \begin{align*}
      \M F &= \{ M_a \mid a\in R,\, F\in M_a\} \text{ and } \\
      \Q F &= \{ Q_q \mid q \text{ monic, irreducible in }
      K[x],\, F\in Q_q\}.
    \end{align*}
  \end{enumerate}
\end{notation}

\begin{fact}[{\cite[Corollary~V.1.2, Lemma~V.1.3, Proposition~V.2.2]{Cahen-Chabert:1997:book}}]\label{fact:iv-prime-ideals}
  Let $(R,M)$ be as in Convention~\ref{conventionRD}. Further, let
  $\Rhat$ denote the $M$-adic completion of $R$ and $\Mhat$ its
  maximal ideal.

  Then
  \[
    \mathrm{Spec}(\IntR)= \{(0)\}\cup \{ Q_q \mid q\in K[x] \text{
      monic, irreducible } \} \cup \{ M_\alpha \mid \alpha\in \Rhat \}.
  \]
  \end{fact}

\subsection{Absolute irreducibility and prime separation}
\label{sec:prime-separation}
Again, we only introduce the notions and tools from factorization
theory that we require below. For a thorough treatment of the topic,
we refer to the textbook of Geroldinger and
Halter-Koch~\cite{GeHa2006:NUFactorizations}.

\begin{convention}
  Ring theoretic entities such as units, irreducible elements, prime
  ideals, etc.\ are defined with respect to the ring $\IntR$, unless
  specified otherwise. Similarly, principal ideals: $(F)$ means
  $F\IntR$ unless specified otherwise.
\end{convention}

\begin{convention}
  As usual in factorization theory, we do not distinguish between
  associated elements of $\IntR$, that is, elements that differ only
  by multiplication by a unit of $\IntR$. (Recall that the units of
  $\IntR$ are the units of $R$.) Also, regarding uniqueness,
  \emph{essentially unique} means unique up to multiplication by units of
  $\IntR$.
\end{convention}

\begin{definition}\label{def:abs-irred}
  Let $D$ be a domain. An irreducible element $d\in D$ is called
  \emph{absolutely irreducible} if $d^n$ factors uniquely in $D$ for
  all integers $n\ge 1$.
\end{definition}

Chapman and Krause~\cite[Lemma~2.1]{ChKr2012:Atomic-decay} showed for
an irreducible element $c$ of an atomic domain $D$ the equivalence of
the following two statements:
\begin{enumerate}
\item $c$ is absolutely irreducible.
\item For all irreducible $b\in D$ which are not associated to $c$
  there exists a prime ideal $P$ of $D$ such that $b \in P$ and
  $c \not \in P$.
\end{enumerate}

We rewrite and refine this characterization below in
Proposition~\ref{proposition:not-abs-irr_lemma}. The proposition's
detailed formulation in particular points out which prime ideals we
need to consider in our further work.

\begin{definition}
  For an ideal $I$ of a domain $D$, let
  \begin{equation*}
    \V I = \{P \in \operatorname{Spec}(D) \mid I \subseteq P\}
  \end{equation*}
  denote the subset of the prime spectrum of $D$ of prime ideals
  containing $I$. For a principal ideal $cD$, we write $\V c$ for
  $\V {cD}$.
\end{definition}

Recall that the radical $\sqrt{I}$ of an ideal $I$ is defined by
\begin{equation*}
  \sqrt{I} = \{d \in D \mid d^k\in I \text{ for some } k\in \N\}
\end{equation*}
and, as we all know, this is equivalent to
\begin{equation*}
  \sqrt{I} = \bigcap_{P\in \V I}P.
\end{equation*}
Therefore, for ideals $I$ and $J$, $\sqrt{I} \subseteq \sqrt{J}$ if
and only if $\V I \supseteq \V J$. More generally, if for some class
of ideals (e.g.\ finitely generated ideals), the radical is always an
intersection of prime ideals from some special subset
$\S \subseteq \operatorname{Spec}(D)$, then again
$\sqrt{I } \subseteq \sqrt{J}$ if and only if
$\V I \cap \S \supseteq \V J \cap \S$.

This simple fact leads to the following extended version of the
criterion of Chapman and Krause above.

\begin{proposition}\label{proposition:not-abs-irr_lemma}
  Let $D$ be an atomic domain and $\S\subseteq \operatorname{Spec}(D)$
  such that for every principal ideal $cD$, the radical $\sqrt{cD}$ is
  the intersection of all $P\in \S$ containing~$c$.
 
  For a non-zero non-unit $d\in D$, the following are equivalent.
  \begin{enumerate}
  \item $d$ is \textbf{not} absolutely irreducible, i.e., some power
    $d^n$ has a factorization into irreducibles essentially different
    from $d\cdots d$ ($n$ copies of $d$).
  \item Some power of $d$ is divisible by some irreducible
    $c\in D$ that is not associated to $d$.
  \item Some power of $d$ is divisible by some non-unit $c\in D$ that
    is not associated to a power of $d$.
  \item $d\in \sqrt{cD}$ for some non-unit $c\in D$ that is not
    associated to a power of $d$.
  \item $\sqrt{dD}\subseteq \sqrt{cD}$ for some non-unit $c\in D$ that
    is not associated to a power of $d$.
  \item $\V c \subseteq \V d$ for some non-unit $c\in D$ that is not
    associated to a power of $d$.
  \item $\V c\cap\S \subseteq \V d\cap\S$ for some non-unit $c\in D$ that
    is not associated to a power of $d$.
\end{enumerate}
\end{proposition}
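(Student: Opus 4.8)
The plan is to establish the chain of equivalences by proving a cycle of implications, with the bulk of the work consisting of standard radical/divisibility manipulations and one appeal to atomicity. I would organize the proof as follows: first show the equivalences $(4)\Leftrightarrow(5)\Leftrightarrow(6)\Leftrightarrow(7)$, which are essentially immediate from the definition of the radical and the hypothesis on $\S$; then prove the chain $(2)\Rightarrow(3)\Rightarrow(4)$ and close the loop with $(5)\Rightarrow(2)$; finally connect the factorization-theoretic statement $(1)$ to the divisibility statement $(2)$ via $(1)\Leftrightarrow(2)$.

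For the radical equivalences, I would argue as follows. The equivalence $(5)\Leftrightarrow(6)$ is exactly the general fact, recalled just before the proposition, that $\sqrt{I}\subseteq\sqrt{J}$ iff $\V I\supseteq\V J$ (applied with $I=cD$, $J=dD$, reading the containment in the stated direction). The equivalence $(6)\Leftrightarrow(7)$ is the refinement of that same fact using the special subset $\S$: by hypothesis the radical of any principal ideal is the intersection of the members of $\S$ containing its generator, so $\sqrt{cD}=\bigcap_{P\in\V c\cap\S}P$ and likewise for $d$, whence the containment of radicals is governed by the reverse containment of these $\S$-restricted vanishing sets. The equivalence $(4)\Leftrightarrow(5)$ follows because $d\in\sqrt{cD}$ means precisely that every prime containing $c$ contains $d$, i.e.\ $\V c\subseteq\V d$, which by the fact above is $(6)$; alternatively $d\in\sqrt{cD}$ says $\sqrt{dD}=\sqrt{(d)}\subseteq\sqrt{cD}$ directly since $d\in\sqrt{cD}$ forces all powers of $d$ into $\sqrt{cD}$.

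For the divisibility chain, $(2)\Rightarrow(3)$ is trivial since an irreducible is in particular a non-unit, and if $c$ is irreducible but not associated to $d$ then $c$ is not associated to any power of $d$ (a power $d^n$ with $n\ge 2$ is not irreducible, hence not associated to the irreducible $c$; and $c\not\assoc d$ handles $n=1$). For $(3)\Rightarrow(4)$: if $c$ divides $d^n$ then $d^n\in cD$, so $d\in\sqrt{cD}$. For $(5)\Rightarrow(2)$, which requires atomicity: from $\sqrt{dD}\subseteq\sqrt{cD}$ we get $d\in\sqrt{cD}$, so some power $d^n\in cD$, i.e.\ $c$ divides $d^n$; writing $c$ as a product of irreducibles (possible since $D$ is atomic), at least one irreducible factor $b$ of $c$ divides $d^n$, and I must check $b$ is not associated to $d$. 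Here is where a short argument is needed: if every irreducible factor of $c$ were associated to $d$, then $c$ would be associated to a power of $d$, contradicting the hypothesis on $c$; hence some irreducible factor $b$ of $c$ is not associated to $d$, and since $b$ divides $c$ which divides $d^n$, statement $(2)$ holds with $b$ in place of $c$.

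The main obstacle, and the only genuinely nontrivial equivalence, is $(1)\Leftrightarrow(2)$, which is the content of the Chapman--Krause lemma rephrased through the negation. For $(2)\Rightarrow(1)$: if a power $d^n$ is divisible by an irreducible $c\not\assoc d$, write $d^n=c\,g$; factoring $g$ into irreducibles (by atomicity) yields a factorization of $d^n$ containing the factor $c$, which cannot be reconciled with the factorization $d\cdots d$ since $c$ is irreducible and not associated to $d$, so $d$ is not absolutely irreducible. For $(1)\Rightarrow(2)$: if $d$ is not absolutely irreducible, some $d^n$ has a factorization into irreducibles essentially different from $d\cdots d$; this alternative factorization must contain an irreducible $c$ not associated to $d$ (otherwise, after regrouping associates, the two factorizations would agree up to units and order, contradicting that they are essentially different), and this $c$ divides $d^n$, giving $(2)$. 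The care required here is to make precise the passage from ``essentially different factorization'' to ``an irreducible factor not associated to $d$'', for which counting multiplicities of the associate-class of $d$ against the other irreducibles is the cleanest route; I would phrase it by noting that if all irreducible factors of the alternative factorization lay in the associate class of $d$, the factorization would be $d\cdots d$ up to units, contradicting essential difference.
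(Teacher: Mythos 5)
Your proof is correct and follows essentially the same route as the paper: the paper proves $(1)\Leftrightarrow(2)$ by exactly the arguments you give (using cancellation in a domain to rule out a factorization consisting solely of associates of $d$ in a different number, and atomicity to complete a factorization of the cofactor) and dismisses the remaining equivalences as elementary. You simply spell out those elementary steps --- the radical/spectrum translations and the cycle $(2)\Rightarrow(3)\Rightarrow(4)\Rightarrow(5)\Rightarrow(2)$ --- in full, which the paper leaves to the reader.
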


\begin{proof}
  $(1) \Rightarrow (2)$: Suppose $d^n$ has a factorization
  essentially different from $d\cdots d$ ($n$ copies of $d$). Then,
  since $d^n = d^m$ for $n\neq m$ is impossible by cancellation in a
  domain, there exists an irreducible $c$ not associated to $d$
  dividing $d^n$.
  
  $(2) \Rightarrow (1)$: Suppose $d^n = ce$ with $c$ irreducible and
  not associated to $d$ and $e\in D$. Then, since $D$ is atomic, we
  can use a factorization $e=e_1\cdots e_m$ into irreducibles to get a
  factorization $d^n = c e_1\cdots e_m$ other than $d\cdots d$ ($n$
  copies of $d$).

  For the remaining statements, the equivalence of each statement to the
  preceding one follows from elementary considerations.
\end{proof}

Now we use specific information about the spectrum of $\IntR$ to
rephrase the proposition above for our setting.

\begin{remark}\label{spec_remark}
  Let $(R,M)$ be as in Convention~\ref{conventionRD}. We know that the
  radical $\sqrt{I}$ of every finitely generated ideal $I$ of $\IntR$
  is an intersection of maximal ideals of the special form
  \begin{equation*}
   M_a =  \{G\in \IntR\mid G(a)\in M \}
  \end{equation*}
  for $a\in R$ and prime ideals of the form
  \begin{equation*}
   Q_q =  \IntR \cap qK[x]
  \end{equation*}
  for monic, irreducible polynomials $q\in K[x]$. The maximal ideals
  of the form $M_\alpha$ for $\alpha\in\Rhat\setminus R$ are
  redundant, for reasons of $M$-adic continuity.
\end{remark}

Setting $\S = \M f \cup \Q f$ in
Proposition~\ref{proposition:not-abs-irr_lemma}(7) gives the
following.

\begin{corollary}\label{general-abs-irred_cor}
  Let $(R,M)$ be as in Convention~\ref{conventionRD}, and $F\in \IntR$
  a non-zero non-unit, and $\M F$ and $\Q F$ as in Notation~\ref{notation:primes}(4).

  \begin{enumerate}
  \item $F$ is absolutely irreducible in $\IntR$ if and only if every
    non-unit $G\in \IntR$ satisfying
    $\M G \cup \Q G \subseteq \M F \cup \Q F$ is associated to a power
    of $F$.
  \item If there exists a non-unit $G\in\IntR$ such that
    \begin{equation*}
      \M G \cup \Q G \subsetneq \M F \cup \Q F,
    \end{equation*}
    then $F$ is not absolutely irreducible.
  \end{enumerate}
\end{corollary}

\section{The posh set of a polynomial}\label{sec:fat-set}

Let $(R,M)$ be as in Convention~\ref{conventionRD}. We will
characterize absolutely irreducible polynomials that split over $R$ in
terms of their root sets. It is, therefore, convenient to work with a
product of linear factors $f = \prod_{s\in S} (x-s)^{m_s}$ with
$S\subseteq R$ and investigate whether the essentially unique
image-primitive $F = \fixdiv_{f}^{-1}f$ which is $K[x]$-associated to
$f$ is absolutely irreducible, cf.~Remark~\ref{remark:irred-img-prim}.

We now reformulate the criterion in
Corollary~\ref{general-abs-irred_cor} for absolute irreducibility in
terms of $f$ instead of $F$.

It is easily seen that $\Q {F} = \Q {f}$. The set $\M F$, however, is
not invariant under multiplication with constants in $K$. To resolve
this issue we now introduce a set $\F F$ that is in one-to-one correspondence
with $\M F$ and is invariant under multiplication with constants in
$K$, so that $\F F = \F f$.

\begin{definition}\label{fatdef}
  Let $(R,M)$ be as in Convention~\ref{conventionRD}.  For
  $F\in K[x]$, we denote by $\F F$ the \emph{posh set} of $F$, that
  is,
  \[
    \F F=\{ r\in R\mid \val(F(r)) > \min_{t\in R} \val(F(t)) \}.
  \]
\end{definition}

\begin{remark}\label{remark:fat-fixdiv}\label{remark:Mf-imgpr}
  Let $(R,M)$ be as in Convention~\ref{conventionRD}. 
  \begin{enumerate}
  \item If $F\in \IntR$ and $\fixdiv_F$ a generator of the fixed
    divisor of $F$ (cf.~Definition~\ref{definition:fixdiv}) then
    \begin{equation*}
      \min_{t\in R} \val(F(t)) = \val(\fixdiv_F).
    \end{equation*}
  \item A polynomial $F\in \IntR$ is
    image-primitive if and only if there exists an element $a\in R$
    such that $F(a) \not\in M$. Therefore, $F$ is
    image-primitive if and only if there exists $a\in R$ with
    $M_a\notin \M F$.
  \end{enumerate}
\end{remark}

\begin{remark}\label{fatrelevance_remark}
  Let $F\in \IntR$ be such that $\min_{r\in R}\val(F(r))=0$, that is,
  $F$ image-primitive. Then 
  \[
    a\in \F F \quad\Longleftrightarrow\quad F(a)\in M.
  \]
  There is, therefore, a one-to-one correspondence between $\F F$ and
  $\M F$, given by $a\mapsto M_a$, in other words,
  \[
    \M F = \{M_a\mid a\in\F F\}\quad\textrm{and}\quad \F F = \{a\in
    R\mid M_a\in \M F\}.
  \]
  Now, unlike $\M F$, the posh set of $F$ is invariant under
  multiplication of $F$ by non-zero constants in $K$. Suppose
  $f\in K[x]$ and $F\in\IntR$ such that $F$ is associated to $f$ in
  $K[x]$, that is, $FK[x] = fK[x]$. If $F$ is image-primitive then
  \[
    \M F = \{M_a\mid a\in\F{f}\} \quad\text{ and }\quad \F {f} = \{a\in
    R\mid M_a\in \M F\} = \F F.
  \]
\end{remark}

We can, therefore, let the posh set of $f$ stand in for $\M{F}$ when we
formulate a criterion for an image-primitive polynomial $F$ to be
absolutely irreducible. Before we formulate this criterion below in
Proposition~\ref{proposition:special-abs-irr}, we establish that by
switching between $F$ and the uniquely determined monic polynomial $f$
which is $K[x]$-associated to $F$ we keep the information about the
powers of these polynomials.

\begin{lemma}\label{power_lemma}
  Let $(R,M)$ be as in Convention~\ref{conventionRD}.  Let $f$,
  $g\in K[x]$ be monic and $F$, $G\in\IntR$ be image-primitive such
  that $FK[x]=fK[x]$ and $GK[x]=gK[x]$.

  Then
  \[
    g = {f}^n \quad\Longleftrightarrow\quad G \assoc F^n,
  \]
  where $\assoc$ means associated in $\IntR$, that is, differing only
  by multiplication by a unit of $R$.
\end{lemma}

\begin{proof}
  Necessarily $G = c^{-1} g$ and $F = d^{-1} f$ for generators $c$ and
  $d$ of the fixed divisors of $g$ and $f$, respectively.  If $g=f^n$
  then $c\assoc d^n$ and, hence,
  $G = c^{-1} g \assoc d^{-n} {f}^n = F^n$.

  Conversely, if $G \assoc F^n$, then $g$ equals ${f}^n$ because each
  is the unique monic generator of the ideal $GK[x]=F^nK[x]$.
\end{proof}

\begin{proposition}\label{proposition:special-abs-irr}
  Let $(R,M)$ be as in Convention~\ref{conventionRD}.  Let $f\in K[x]$
  be monic and $F\in\IntR$ be image-primitive such that $FK[x]=fK[x]$.

  Then the following assertions are equivalent:
  \begin{enumerate}
  \item $F$ is absolutely irreducible.
  \item Every monic $g\in K[x]$ with $g\neq 1$ satisfying
    $\Q {g}\subseteq \Q{f}$ and $\F {g}\subseteq \F{f}$ is a power of
    $f$.
  \end{enumerate}
\end{proposition}

\begin{proof}
  Assume $(1)$.
  Let
  $G\in\IntR$ be image-primitive with $GK[x]=gK[x]$.  Then
  $\Q G = \Q {g} \subseteq \Q {f} = \Q F$ and
  \begin{equation*}
    \M G = \{M_a\mid a\in \F{g}\}\subseteq \{M_a\mid a\in \F{f}\}= \M
    F.
  \end{equation*}
  By Corollary~\ref{general-abs-irred_cor}, $G\assoc F^n$ for some
  $n\in\N$, and, hence, $g={f}^n$, by Lemma~\ref{power_lemma}.

  Conversely, assume $(2)$.
  Now, if some $G\in\IntR$ satisfies $\M G \subseteq \M F$ and
  $\Q G \subseteq \Q F$ then, since
  $\M F\subsetneq \{M_a\mid a\in R\}$, it follows that $G$ is
  image-primitive, cf.~Remark~\ref{remark:Mf-imgpr}.  Let $g$ be monic
  with $gK[x]=GK[x]$. Then
  \begin{equation*}
    \F {g} = \{a\in R\mid M_a\in \M G\} \subseteq \{a\in R\mid M_a\in
    \M F\} = \F {f}
  \end{equation*}
  and
  \begin{equation*}
    \Q {g} = \Q G \subseteq \Q F = \Q {f}.
  \end{equation*}
  By hypothesis, $g ={f}^n$ (for some $n$) follows, and, by
  Lemma~\ref{power_lemma}, $G\assoc F^n$.  Applying
  Corollary~\ref{general-abs-irred_cor}, we conclude that $F$ is
  absolutely irreducible.
\end{proof}

\begin{corollary}\label{strict}
  Let $(R,M)$ be as in Convention~\ref{conventionRD}, $f\in K[x]$ be
  monic and $F\in\IntR$ image-primitive, such that $fK[x]=FK[x]$.

  If there exists a monic, non-constant polynomial $g\in K[x]$ with
  \[
    \Q {g}\subseteq \Q{f} \quad\textrm{and}\quad \F {g}\subsetneq
    \F{f}
  \]
  or
  \[
    \Q {g}\subsetneq \Q{f} \quad\textrm{and}\quad \F {g}\subseteq
    \F{f}
  \]
  then $F$ is not absolutely irreducible.
\end{corollary}


We now turn our attention to split integer-valued polynomials, which
are our main subject of investigation.

\begin{remark}\label{remark:split-and-Q}
  Let $f = \prod_{s\in S}(x-s)^{m_s}$ with $m_s>0$ positive integers
  for $s\in S$.  For a monic, non-constant polynomial $g\in K[x]$ the
  condition
  \begin{equation}\label{eq:g0split}
    \Q {g} \subseteq \Q{f} 
  \end{equation}
  is equivalent to $g = \prod_{t\in T}(x-t)^{k_t}$ with
  $\emptyset \neq T \subseteq S$ and $k_t>0$ for $t\in T$.
\end{remark}

Of the two entities considered in
Proposition~\ref{proposition:special-abs-irr} and
Corollary~\ref{strict}, $\Q{f}$ is completely determined by the
irreducible factors of $f$, that is, in the case of a split
polynomial, by the roots (no multiplicities considered).  The posh set
$\F {f}$ is also closely connected to the root set but here the
multiplicities matter, as we shall see.

\section{Distribution of the roots of a split polynomial}
\label{section:partition}

In this section we discuss the connection between the posh set of a
split polynomial $f = \prod_{s\in S}(x-s)^{m_s}$ and the distribution
of the root set $S$ among the residue classes of powers of the maximal
ideal $M$ of the discrete valuation domain $R$.

\begin{definition}\label{def:Mpart-bal}
  Let $(R,M)$ be as in Convention~\ref{conventionRD}.
  \begin{enumerate}
  \item\label{item:Mpart} By an \emph{$M$-adic partition of $R$} we
    understand a finite partition of $R$ into residue classes of
    powers of $M$, that is
    \[
      \P = \{ s+M^{n_s}\mid s\in S\},
    \]
    where $S\subseteq R$ is a finite set,
    $R=\bigcup_{s\in S} (s+M^{n_s})$ and
    $(s+M^{n_s})\cap (t+M^{n_t})=\emptyset$ for $s\ne t$. We say the
    set $S$ is a \emph{set of representatives of $\P$}.
  \item The pair $(S,\P)$ where $\P$ is an $M$-adic partition and $S$
    is a set of representatives is also called a \emph{pointed
      $M$-adic partition of $R$}.
\end{enumerate}
\end{definition}

\begin{definition}\label{definition:balanced}
  Let $(R,M)$ be as in Convention~\ref{conventionRD}.  We call
  $S\subseteq R$ \emph{$(R,M)$-balanced} if, when we take for each
  $s\in S$ the minimal $n_s$ such that $s+M^{n_s}$ contains no other
  element of $S$, the resulting disjoint basic $M$-adic neighborhoods
  $s+M^{n_s}$ cover $R$. If $R$ and $M$ are understood we just say
  \emph{balanced} for $(R,M)$-balanced.
\end{definition}

\begin{remark}\label{remark:balanced-partition}
  Note that the set of representatives $S$ of a pointed $M$-adic
  partition $\PS$ is a balanced set; and conversely, that for each
  balanced set $S$ the maximal basic $M$-adic neighborhoods
  $s+M^{n_s}$ disjoint from the remaining elements of $S$, together
  with $S$ as a system of representatives, constitute a pointed
  $M$-adic partition of $R$.

  The blocks of an $M$-adic partition can be visualized as the leaves
  of a $q$-adic tree where $q = |R/M|$. We obtain a balanced set by
  choosing one representative for each block.  For example,
  Figure~\ref{fig:tree} shows the tree corresponding to the balanced
  set $\{0,3,6,15,24,1, 2,11,20,47,128,209,74,5,8\}$ in $\Z$ localized
  at~$3$.
\end{remark}
\begin{figure}[h]\centering
    \resizebox{0.8\textwidth}{!}{
    \begin{tikzpicture}
      \node[fill, circle, inner sep = 1pt,] (root) at (0,0) {};
      \node[black] (leveltop) at (-5.5, -0.2) {\footnotesize $\pmod{3^0}$};
      \node  at (5.5, 0) {\footnotesize \phantom{$\pmod{3^0}$}};
      \node[black, below = 0.5cm of leveltop] (level1) {\footnotesize $\pmod{3^1}$};
      \node[black, below = 0.5cm of level1] (level2) {\footnotesize $\pmod{3^2}$};
      \node[black, below = 0.5cm of level2]  (level3){\footnotesize $\pmod{3^3}$};
      \node[black, below = 0.5cm of level3]  (level4){\footnotesize $\pmod{3^4}$};
      \node[black, below = 0.5cm of level4]  (level5){\footnotesize $\pmod{3^5}$};
    
      \node[below = of root,
            xshift = -3cm, fill, circle, inner sep = 1pt, label=left:{\footnotesize $0$}] (03) {};
      \node[below = of root, fill, circle, inner sep = 1pt, label=left: {\footnotesize $1$}] (13){};
      \node[below = of root, xshift =  3cm, fill, circle, inner sep = 1pt, label=right: {\footnotesize $2$}] (23) {};
      \draw (root) -- (03);    
      \draw (root) -- (13);    
      \draw (root) -- (23);
    
      \node[below = of 03, xshift = -1cm, fill, circle, inner sep = 1pt,
            label=left:{ \footnotesize $0$}] (09) {};
      \node[below = of 03, xshift = 0cm, fill, circle, inner sep = 1pt,
            label=left:{ \footnotesize $3$}] (39) {};
      \node[below = of 03, xshift = 1cm, fill, circle, inner sep = 1pt,
            label=right:{ \footnotesize $6$}] (69) {};
      \draw (03) -- (69);
      \draw (03) -- (09);
      \draw (03) -- (39);

      \node[below = of 69, xshift = -1cm, fill, circle, inner sep = 1pt,
            label=left:{ \footnotesize $6$}] (627) {};
      \node[below = of 69, xshift = 0cm, fill, circle, inner sep = 1pt,
            label=left:{ \footnotesize $15$}] (1527) {};
      \node[below = of 69, xshift = 1cm, fill, circle, inner sep = 1pt,
            label=right:{ \footnotesize $24$}] (2427) {};
      \draw (69) -- (627);
      \draw (69) -- (1527);
      \draw (69) -- (2427);
      
      \node[below = of 23, xshift = -1cm, fill, circle, inner sep = 1pt,
            label=left:{ \footnotesize $2$}] (29) {};
      \node[below = of 23, xshift = 0cm, fill, circle, inner sep = 1pt,
            label=left:{ \footnotesize $5$}] (59) {};
      \node[below = of 23, xshift = 1cm, fill, circle, inner sep = 1pt,
            label=right:{ \footnotesize $8$}] (89) {};
      \draw (23) -- (29);
      \draw (23) -- (59);
      \draw (23) -- (89);

      \node[below = of 29, xshift = -1cm, fill, circle, inner sep = 1pt,
            label=left:{ \footnotesize $2$}] (227) {};
      \node[below = of 29, xshift = 0cm, fill, circle, inner sep = 1pt,
            label=left:{ \footnotesize $11$}] (1127) {};
      \node[below = of 29, xshift = 1cm, fill, circle, inner sep = 1pt,
            label=right:{ \footnotesize $20$}] (2027) {};
      \draw (29) -- (227);
      \draw (29) -- (1127);
      \draw (29) -- (2027);

      \node[below = of 2027, xshift = -1cm, fill, circle, inner sep = 1pt,
            label=left:{ \footnotesize $20$}] (203^4) {};
      \node[below = of 2027, xshift = 0cm, fill, circle, inner sep = 1pt,
            label=left:{ \footnotesize $47$}] (473^4) {};
      \node[below = of 2027, xshift = 1cm, fill, circle, inner sep = 1pt,
            label=right:{ \footnotesize $74$}] (743^4) {};
      \draw (2027) -- (203^4);
      \draw (2027) -- (473^4);
      \draw (2027) -- (743^4);
      
      \node[below = of 473^4, xshift = -1cm, fill, circle, inner sep = 1pt,
            label=left:{ \footnotesize $47$}] (473^5) {};
      \node[below = of 473^4, xshift = 0cm, fill, circle, inner sep = 1pt,
            label=left:{ \footnotesize $128$}] (1283^5) {};
      \node[below = of 473^4, xshift = 1cm, fill, circle, inner sep = 1pt,
            label=right:{ \footnotesize $209$}] (2093^5) {};
      \draw (473^4) -- (473^5);
      \draw (473^4) -- (1283^5);
      \draw (473^4) -- (2093^5);

     \node[draw=black, fill=white, inner sep = 4pt] at (13) {};
     \node[draw=black, fill=white, inner sep = 4pt] at (09) {};
     \node[draw=black, fill=white, inner sep = 4pt] at (39) {};
     \node[draw=black, fill=white, inner sep = 4pt] at (59) {};
     \node[draw=black, fill=white, inner sep = 4pt] at (89) {};
     \node[draw=black, fill=white, inner sep = 4pt] at (627) {};
     \node[draw=black, fill=white, inner sep = 4pt] at (1527) {};
     \node[draw=black, fill=white, inner sep = 4pt] at (2427) {};
     \node[draw=black, fill=white, inner sep = 4pt] at (227) {};
     \node[draw=black, fill=white, inner sep = 4pt] at (1127) {};
     \node[draw=black, fill=white, inner sep = 4pt] at (203^4) {};
     \node[draw=black, fill=white, inner sep = 4pt] at (743^4) {};
     \node[draw=black, fill=white, inner sep = 4pt] at (473^5) {};
     \node[draw=black, fill=white, inner sep = 4pt] at (1283^5) {};
     \node[draw=black, fill=white, inner sep = 4pt] at (2093^5) {};
   \end{tikzpicture}
  }
  \caption{}
  \label{fig:tree}
\end{figure}
Clearly not every finite set is balanced. We can, nevertheless, extend
the argument of Remark~\ref{remark:balanced-partition} to associate a
unique $M$-adic partition to every finite subset $S$ of $R$ in such a
way that $S$ contains a balanced subset associated to the same
partition.

\begin{lemma}\label{lemma:S-partition}
  Let $(R,M)$ be as in Convention~\ref{conventionRD} and
  $S\subseteq R$ a finite subset of $R$.

  Then there exists a uniquely determined $M$-adic partition
  \begin{equation*}
    \P_S =  \left\{ s + M^{n_s} \mid s\in S \right\}
  \end{equation*}
  of $R$ such that every residue class $s+M^{n_s}$ that occurs as a
  block of $\P_S$ contains both a residue class of $M^{n_s+1}$
  intersecting $S$ and a residue class of $M^{n_s+1}$ disjoint from
  $S$.

  As a consequence, $S$ contains a balanced subset $S'$ with
  $\P_S = \P_{S'}$.
\end{lemma}

\begin{proof}
  To see the existence of $\P_S$, we construct it, inductively, as
  follows. In each step~$k$, we start with two sets of $M$-adic
  neighborhoods: $\P_k$ (containing residue classes of $M^n$ for
  various $n<k$ that have already been chosen as blocks of our
  partition), and ${\C}_k$ (containing residue classes of $M^k$ under
  consideration as potential blocks of the partition), such that
  \begin{enumerate}
  \item ${\P_k}\uplus {\C_k}$ is a partition of $R$ (we use $\uplus$
    to denote disjoint unions)
  \item each $(r+M^n)\in \P_k$ contains both a residue class of
    $M^{n+1}$ intersecting $S$ and a residue class of $M^{n+1}$
    disjoint from $S$
  \item $(r+M^k)\cap S\ne\emptyset$ for each $(r+M^k)\in \C_k$.
  \end{enumerate}

  The process terminates when $\C_k=\emptyset$ and, consequently,
  $\P_k=\P_S$ is a partition of $R$ with the desired properties.

  At the beginning of Step~$0$, ${\C_0}= \{R\}$ and
  ${\P_0}= \emptyset$.  If some residue classes of $M$ contain
  elements of $S$ and some do not, then we set ${\P}_1=\{R\}$ and
  ${\C_1}=\emptyset$ and we are done.

  Otherwise, we break up $R$ into residue classes of $M$ and put
  these in ${\C}_1$, so that we have ${\P}_1=\emptyset$ and
  ${\C}_1=\{r_1+M,\ldots,r_q+M\}$.

  At Step $k$, we define $\P_{k+1}$ to be the union of $\P_k$ and the
  set of those residue classes of $M^k$ in $\C_k$ that contain
  residue classes of $M^{k+1}$ intersecting $S$ as well as residue
  classes of $M^{k+1}$ disjoint from $S$.  The remaining residue
  classes of $M^k$ in $\C_k$, containing only such residue classes
  of $M^{k+1}$ that intersect $S$ nontrivially, we split into residue
  classes of $M^{k+1}$ and let $\C_{k+1}$ be the set of these
  residue classes.

  Since $S$ is finite, the process terminates and we get the desired
  partition of $R$.

  To see uniqueness, consider that each residue class $r+M^k$ that we
  add to $\P_k$, and, eventually, to $\P_S$, in Step~$k$ must occur as a
  block of the partition, because otherwise $\P_S$ would have to contain
  as blocks some residue classes contained in $r+M^k$ that are
  disjoint from $S$.
\end{proof}

It is a key fact in our characterization of split absolutely
irreducible polynomials that their posh set is as small as can be. To
formalize this, we introduce the rich set of a split polynomial, which
is always contained in its posh set, and show that the posh set equals
the rich set in the case of an absolutely irreducible split
polynomial.

\begin{definition}\label{def:assoc-part-rich-poor}
  Let $(R,M)$ be as in Convention~\ref{conventionRD} and
  $S \subseteq R$ a finite subset.
  \begin{enumerate}
  \item We call the (uniquely determined) partition $\P_S$ of
    Lemma~\ref{lemma:S-partition} the \emph{partition associated to
      $S$}.
  \item For $s\in S$, let $\rho_S(s) \in \N$ be the uniquely
    determined non-negative integer such that $s + M^{\rho_S(s)}$ is a
    partition block of $\P_S$.
  \item An \emph{$S$-rich neighborhood} is a residue class
    $s+M^{\rho_S(s)+1}$ with $s\in S$.
  \item An \emph{$S$-poor neighborhood} is a residue class of the form
    $r + M^{\rho_S(s)+1}$ disjoint from $S$ where
    $r\in s + M^{\rho_S(s)}$ for some $s\in S$.
  \item The {\em{rich set of $S$}}, denoted by $\R S$, is the union of
    the rich neighborhoods of the partition $\P$ associated to $S$,
    that is
    \begin{equation*}
      \R S = \bigcup_{s\in S} s + M^{\rho_S(s) + 1}.
    \end{equation*}
  \item For a polynomial $f\in K[x]$ that splits over $R$ the
    {\em{rich set of $f$}}, denoted by $\R f$, is defined to be the
    rich set of the set of its roots.
  \end{enumerate}
\end{definition}

\begin{remark}\label{remark:rho}
  \begin{enumerate}
  \item It follows from the proof of Lemma~\ref{lemma:S-partition}
    that, for $s\in S$, $\rho_S(s) \in \N_0$ is the minimal number
    such that $s + M^{\rho_S(s)}$ contains an $S$-rich neighborhood as
    well as an $S$-poor neighborhood. In particular, if
    $T\subseteq S$ is a subset of $S$, then $\rho_T(t) \le \rho_S(t)$
    for all $t\in T$.
  \item For $n\in \N$, the following implication holds
    \begin{equation*}
      n\le \rho_S(s) \Longrightarrow s + M^n \nsubseteq \R S.
    \end{equation*}
    In particular, if $T\subseteq S$ and there exists $t\in T$ with
    $\rho_T(t) < \rho_S(t)$, then $\R T \nsubseteq \R S$.
  \item\label{item:disj} If $S$ is a balanced set then the rich set
    \begin{equation*}
      \R S = \biguplus_{s\in S} s + M^{\rho_S(s)+1}
    \end{equation*}
    is the disjoint union of the $S$-rich neighborhoods
    $s + M^{\rho_S(s)+1}$.
  \item If $S'$ is a balanced set contained in $S$ with
    $\P_S = \P_{S'}$, then $\rho_{S'}(s) = \rho_S(s)$ for all
    $s\in S'$ and $\R{S'} \subseteq \R{S}$. Note that equality may
    hold even if $S$ is not balanced as an $S$-rich neighborhood can
    contain more than one element of $S$.
  \end{enumerate}
\end{remark}

\begin{definition}\label{definition:assoc-balanced}
  Let $(R,M)$ be as in Convention~\ref{conventionRD} and
  $S \subseteq R$ a finite subset.  We call a set $S'$ a
  \emph{balanced set associated to $S$} if $S'$ is balanced and
  contained in $S$ with $\P_S = \P_{S'}$. Further, for $t\in S'$, we
  write
  \begin{equation*}
    S_t = \{s \in S \mid s + M^{\rho_S(s)} = t + M^{\rho_S(t)} \}
  \end{equation*}
  for the set of all elements in $S$ which are elements of the
  partition block of $t\in S'$ of the partition $\P_S = \P_{S'}$.
\end{definition}

We are ready to show that the rich set is always contained in the posh
set of a split polynomial.
Let $f\in K[x]$ be a split monic polynomial whose root set $S$ is a
subset of $R$.

As the posh set of $f$ is invariant under multiplication of $f$ by
non-zero constants and the rich set of $f$ only depends on the set of
roots $S$, we may assume that
\begin{equation*}
  f = \prod_{s\in S} (x-s)^{h_s} \in R[x]
\end{equation*}
where $S\subseteq R$ is finite and $h_s\in \N$ for $s\in S$. Let
$S'\subseteq S$ be a balanced set associated to $S$,
cf.~Definition~\ref{definition:assoc-balanced}.

Let $u$, $t\in S'$ with $u \neq t$ and $w \in u + M^{\rho_S(u)}$ and
$s \in t + M^{\rho_S(t)}$ be elements in two distinct partition blocks
of the  partition $\P_S = \P_{S'}$. Then the valuation $\val(w-s)$ only
depends on the blocks and not the specific choice of elements $w$ and
$s$, that is,
\begin{equation}\label{eq:val-diff-blocks}
  \val(w-s) = \val(u-t) \text{ for all } w\in u + M^{\rho_S(u)}, s\in t+M^{\rho_S(t)} \text{ with } u, t\in S', u\neq t.
\end{equation}
Therefore, if $w\in u + M^{\rho_S(u)}$ for $u\in S'$, then
\begin{align}\label{eq:eqs-p1}
  \begin{split}
    \val(f(w))
    &= \sum_{t\in S}h_t\val(w-t) = \sum_{t\in S'}\sum_{s\in S_t}h_s\val(w-s) \\
    &= \sum_{s\in S_u} \val(w-s)h_s + \sum_{\substack{t\in S' \\ t\neq u}}\val(u-t)\left(\sum_{s\in
      S_t}h_s\right).
  \end{split}
\end{align}
Observe that the second summand in the last line of
Equation~\eqref{eq:eqs-p1} only depends on the block
$u + M^{\rho_S(u)}$, not the specific choice of $w$.

For the first summand, however, it makes a significant difference
whether $w$ is element of an $S$-rich or an $S$-poor neighborhood.  If
$w$ is in an $S$-poor neighborhood of the block $u+M^{\rho_S(u)}$,
then
\begin{equation}
  \label{eq:poor-1}
  \val(w-s) = \val(w-u) = \rho_S(u) \text{ for all } s\in S_u.
\end{equation}
Now, if $r$ is in an $S$-rich neighborhood of the partition
block of $u + M^{\rho_S(u)}$, then the following hold:
\begin{itemize}
\item $\val(r-s) \ge \rho_S(u)$ for all $s \in S_u$ and
\item there exists $s\in S\cap r+M^{\rho_S(s)+1}$ such that
  $\val(r-s) > \rho_S(u)$.
\end{itemize}
Therefore, for this choice of $w$ and $r$, it follows that
\begin{equation*}
  \val(f(r)) - \val(f(w)) = \sum_{s\in S_u} (\val(r-s) - \rho_S(u))h_s >0
\end{equation*}
and hence
\begin{equation}
  \label{eq:rich>poor}
  \val(f(r)) > \val(f(w)) \ge \min_{t\in R}\val(f(t)) = \val(\fixdiv_f)
\end{equation}
where $\fixdiv_f$ is a generator of the fixed divisor of $f$.  This
immediately implies the following lemma

\begin{lemma}\label{lemma:RF_lemma-1}
  Let $(R,M)$ be as in Convention~\ref{conventionRD}, $S\subseteq R$ a
  finite set and $f = \prod_{s\in S}(x-s)^{m_s}$ with $m_s\in \N$ for
  $s\in S$.

  Then every element in an $S$-rich neighborhood of the partition
  $\P_S$ is in the posh set $\F f$, that is, $\R f \subseteq \F f$.

  In other words, every element $w\in R$ with
  $\val(f(w)) = \min_{t\in R}\val(f(t))$ is in an $S$-poor
  neighborhood of $\P_S$.
\end{lemma}

\section{Characterizing split polynomials with (relatively) small posh
  sets}\label{subsection:small-fat-sets}

There are two ways to characterize small posh sets. On one hand, the
inclusion $\R f \subseteq \F f$ (Lemma~\ref{lemma:RF_lemma-1}) means
that $\F f$ is smallest possible if $\F f = \R f$
(Lemma~\ref{FR_lemma}). On the other hand, we can measure the size of
the posh set by a finitely additive probability measure
(Lemma~\ref{RF_lemma}).

\begin{definition}\label{measuredef}
  For an ideal $I$ of finite index in $R$ let ${{|\!|}I{|\!|}}=[R:I]$
  and let $\sigma$ be the finitely additive probability measure on $R$
  defined by the requirement
  \[\sigma(r+I)= \frac{1}{{{|\!|}I{|\!|}}}\]
  whenever $r\in R$ and $I$ is an ideal of finite index in $R$;
  cf.~\cite{FriPaTiWi:1999:additive-measures}.  For our purposes, all
  we need to know about $\sigma$ is the values that it takes on finite
  unions of residue classes of ideals of finite index.
\end{definition}

\begin{remark}\label{remark:sigma-incl}
  Let $(R,M)$ be a local ring with finite residue field $R/M$ of
  order~$q$.
  \begin{enumerate}
  \item If $S\subseteq R$ is a finite set and $n_s\in \N$ for $s\in S$,
    then
    \begin{equation*}
      \sigma\left(\biguplus_{s\in S} s+M^{n_s} \right) = \sum_{s\in S} \frac{1}{q^{n_s}}.
    \end{equation*}
  \item If $A \subseteq B$ are subsets of $R$ that are each a finite
    union of residue classes of powers of $M$, then $A=B$ if and only
    if $\sigma(A) = \sigma(B)$.
\end{enumerate}
\end{remark}

Next, we discuss the question under which conditions the rich set and
the posh set have minimal $\sigma$-measure.

\begin{lemma}\label{RF_lemma}
  Let $(R,M)$ be as in Convention~\ref{conventionRD} and $|R/M|=q$.
  Let $f\in K[x]$ such that $f$ splits over $K$ whose root set $S$ is
  a subset of $R$.

  Then
  \begin{enumerate}
  \item\label{ReqF} $\R f = \F f$ if and only if
    $\sigma(\R f) = \sigma(\F f)$.
  \item\label{minsigma} $\sigma(\R f) = \sigma(\R S) \ge \frac{1}{q}$.
    Equality holds if and only if every block of $\P_S$ contains
    only one rich neighborhood.
  \item\label{Fminsigma} $\sigma(\F f)= \frac{1}{q}$ if and only if
    $\R f = \F f$ and every block of $\P_S$ contains only one rich
    neighborhood.
  \end{enumerate}
\end{lemma}
\begin{proof}
  Recall that $\R f \subseteq \F f$ by Lemma~\ref{lemma:RF_lemma-1}.

  Ad \eqref{ReqF}.  $\F f$ and $\R f$ are each a finite union of
  residue classes of various powers of $M$, and so is
  $\F f\setminus \R f$.  For sets of this kind, $\sigma$ takes a
  positive value unless they are empty.

  Ad \eqref{minsigma}.  Each block $C=z+M^n$ of the partition $\P_S$
  of $R$ contains at least one $S$-rich neighborhood $s + M^{n+1}$ and
  therefore
  $\sigma(\R f\cap C)\ge \frac{1}{q^{n+1}} = \frac{1}{q}\sigma(C)$.
  It follows that
  \[
    \sigma(\R f) = \sum_{C\in\P_S} \sigma(\R f\cap C)\ge
    \frac{1}{q}\sum_{C\in\P_S} \sigma(C) = \frac{1}{q}.
  \]
  By the same token, $\sigma(\R f)=\frac{1}{q}$ if and only if
  $\sigma(\R f\cap C)=\frac{1}{q} \sigma(C)$ for every block of the
  partition. Since every block contains at least one rich
  neighborhood, this is equivalent to saying that every block of the
  partition contains exactly one rich neighborhood.

  \eqref{Fminsigma} now follows from \eqref{ReqF} and
  \eqref{minsigma}.
\end{proof}

\begin{remark}\label{remark:balanced-sigma}
  Balanced sets have two properties:
  \begin{enumerate}
  \item The rich set $\R S$ of a balanced set has the minimal possible
    $\sigma$-measure. This is equivalent to each block of the
    associated partition having exactly one rich neighborhood,
    see~Lemma~\ref{RF_lemma}(2).
  \item Every balanced set is minimal with respect to inclusion among
    all finite sets sharing the same rich set. This property is
    equivalent to every rich neighborhood containing only one element
    of the underlying set.
  \end{enumerate}
  Note that balanced sets are characterized by these two properties. 
\end{remark}

We now characterize the case where $\R f = \F f$ holds in terms of
root multiplicities. For this purpose, we revisit the observations
made before Lemma~\ref{lemma:RF_lemma-1} and recall some notation.

Namely, suppose $S\subseteq R$ is a finite (not necessarily balanced)
set, $S'$ a balanced set associated to $S$ and for $t\in S'$ let
$S_t=\{s\in S \mid s+M^{\rho_S(s)} = t+M^{\rho_S(t)}\}$,
cf.~Definition~\ref{definition:assoc-balanced}. Further, let
$f=\prod_{s\in S}(x-s)^{h_s}$ where $h_s$ is a positive integer for
each $s\in S$. If $w\in R$ is an element of an $S$-poor neighborhood of
the partition block $u + M^{\rho_S(u)}$ of $\P_S$, then by
Equations~\eqref{eq:eqs-p1} and~\eqref{eq:poor-1}, it follows that
\begin{equation}\label{eq:poor-eqs}
  \val(f(w)) = \rho_S(u)\sum_{s\in S_u}h_s + \sum_{u\neq t\in S'}\val(u-t)\left(\sum_{s\in S_t}h_s\right).
\end{equation}
Note that the right hand side of this equality does not depend on the
specific choice of $w$, that is, $\val(f(w))$ is the same value for
all $w\in u+M^{\rho_S(u)} \setminus \R f$.

For $t\in S'$, let $m_t = \sum_{s\in S_t}h_s$ denote the number of
roots $s$ of $f$ (counting multiplicities) which are elements of the
block $t + M^{\rho_{S}(s)}$. Then, by Equation~\eqref{eq:poor-eqs},
the column vector $(m_s)_{s\in S'}$ is a solution over the positive
integers to the linear equation system
\begin{equation}\label{eq:eqs-p2}
  \sum_{\substack{t\in S' \\ t\neq u}}\val(t-u)x_t + \rho_S(u)x_u
  = \val(f(w)) \text{ with } u \in S',
\end{equation}
where $w\in u+M^{\rho_S(u)} \setminus \R f$ is fixed, but arbitrary.
This motivates the following definition.

\begin{definition}\label{matrix_def}
  Let $\P$ be an $M$-adic partition of $R$ and $T$ a set of
  representatives of $\P$, that is,
  \begin{equation*}
    \P = \P_{T} = \{s + M^{\rho_{T}(s)} \mid s\in T\}
  \end{equation*}
  as in Definition~\ref{def:Mpart-bal}.  We define the \emph{partition
    matrix} of $\P$ to be the square matrix $A=A_{\P}$ whose rows and
  columns are indexed by $\P$, or, equivalently, by $T$:
  $A_\P=(a_{s,t})_{s,t\in T}$, where
  \[
    a_{s,t} =
    \begin{cases}
      \rho_{T}(s)  & s = t \\
      \val(s-t)      & s\neq t
    \end{cases}.
  \]
  Note that $A_{\P}$ depends only on $\P$, not on the system $T$ of
  representatives.
\end{definition}

Recall that $\R f = \F f$ holds if and only if for each $w\in R$
which is an element of an $S$-poor neighborhood
\begin{equation*}
  \val(f(w)) = \min_{t\in R}\val(f(t)) 
\end{equation*}
holds. Considering that $\min_{t\in R}\val(f(t)) = \val(\fixdiv_f)$,
this means that the right hand side of the linear equation
system~\eqref{eq:eqs-p2} is the vector
$\left(\val(\fixdiv_f)\right)_{s\in S'}$, whose every coordinate is
the same. We introduce the following notation to make it easier to
refer to a vector of this form.

\begin{notation}
  For $n\in \N$, let $\one_n = (1)_{1\le i \le n}$ denote the vector
  whose every entry equals 1. Moreover, we write
  $\x = (x_i)_{1\le i \le n}$ for a vector of indeterminates where we
  omit the size $n$ for better readability.
\end{notation}

\begin{lemma}\label{FR_lemma}
  Let $(R,M)$ be as in Convention~\ref{conventionRD} and
  \begin{equation*}
    f = \prod_{s\in S} (x-s)^{h_s}
  \end{equation*}
  for a finite subset $S \subseteq R$ and positive integers $h_s$ for
  $s\in S$.

 Further, let $A = A_{\P_S}$ be the partition matrix of $\P_S$,
 $S' \subseteq S$ be a balanced set associated to $S$ and
 $n = |S'| = |\P_S|$.

 If $m_t = \sum_{s\in S_t}h_s$ denotes the number of roots of $f$
 (counted with multiplicities) in the partition block
 $t + M^{\rho_S(t)}$ for $t\in S'$
 (cf.~Definition~\ref{definition:assoc-balanced}), then the following
 assertions are equivalent:
 \begin{enumerate}
 \item $\F f = \R f$.
 \item For every $r$ in an $S$-poor neighborhood,
   $\val(f(r))=\val(\fixdiv_f)$.
 \item For every $r$ in an $S$-poor neighborhood, $\val(f(r))$ is the
   same value.
 \item The column vector $(m_t)_{t\in S'}$ is a solution to
   $A\x = \val(\fixdiv_f)\one_{n}$.
 \item The column vector $(m_t)_{t\in S'}$ is a solution to
   $A\x = e\one_{n}$ for some $e\in \N_0$ where $e = 0$ if
   and only if $n = 1$.
 \end{enumerate}
\end{lemma}

\begin{proof}
  Recall $\F f = \{s\in R\mid \val(f(s)) > \val(\fixdiv_f)\}$ and that
  $\R f \subseteq \F f$ holds by
  Lemma~\ref{lemma:RF_lemma-1}(1). Moreover,
  $\val(\fixdiv_f) = \val(f(w))$ for some $w\in R$ which is contained
  in an $S$-poor neighborhood. Also, $|S'|>1$ if and only if $\P_S$
  contains more than one partition block. In this case $S$ contains a
  complete set of residues modulo $M$ which is equivalent to
  $\val(\fixdiv_f) > 0$.

  Now, $\F f = \R f$ if and only if no $S$-poor neighborhood is
  contained in the posh set $\F f$, meaning, for every $w$ in an
  $S$-poor neighborhood, $\val(f(w))=\val(\fixdiv_f)$.  This is the
  case if and only if $\val(f(w))$ is the same for all elements $w$ of
  $S$-poor neighborhoods.  Since by Equation~\eqref{eq:eqs-p2}
  \[
    \sum_{t\in S'} a_{u, t}m_t = \val(f(w))
  \]
  holds for every $w$ in an $S$-poor neighborhood of the block
  $u+M^{\rho_S(u)}$ with $u\in S'$, the result follows.
\end{proof}

Given a split polynomial $f$, Lemma~\ref{FR_lemma} provides us with
an easy computational method to check whether $\R f = \F f$
holds. In addition, we can also use the equivalent assertion of the lemma
to construct an absolutely irreducible polynomial whose root set is a
given balanced set.

\section{Constructing split absolutely irreducible polynomials}
\label{sec:equalizing-polynomial}

The goal of this section is to prove
Theorem~\ref{theorem:equpoly-exists}.  Given a balanced set $S$ we
construct a (uniquely determined) absolutely irreducible polynomial
whose root set is $S$, by computing root multiplicities
$(m_s)_{s\in S}$ such that $f = \prod_{s\in S}(x-s)^{m_s}$ satisfies a
condition which by Proposition~\ref{proposition:special-abs-irr}
implies that $\fixdiv_f^{-1}f$ is absolutely irreducible.

For this we show that for a balanced set $S$ there exists a vector of
multiplicities such that the corresponding polynomial satisfies
condition~(3) of Lemma~\ref{FR_lemma}. We then prove that the system
matrix of the linear equation system in condition~(5) of the same
lemma is non-singular. In the last part of the section we show that
the solution over the positive integers with minimal possible
right-hand side $e$ is the right choice for the multiplicities of the
roots.

\subsection{Choosing the multiplicities of the roots}

\begin{proposition}\label{Bpositive_lemma}
  Let $(R,M)$ be as in Convention~\ref{conventionRD}, $S$ a balanced
  set and $\P_S$ the partition associated to it
  (Definition~\ref{def:assoc-part-rich-poor}).

  Then there exists a vector $(m_s)_{s\in S}\in \N^{|S|}$ of positive
  integers such that $f=\prod_{s\in S} (x-s)^{m_s}$ satisfies: for
  every $r$ in an $S$-poor neighborhood of $\P_S$, $\val(f(r))$ takes
  the same value $e$.
\end{proposition}

\begin{proof}
  Let $m$ be maximal such that some residue class of $M^m$ is a block
  of $\P_S$.  By (reverse) induction from $n=m$ down to $n=0$ we show:

  For every residue class $C$ of $M^n$ that is a union of blocks of
  $\P_S$ we can find multiplicities $k_s$ for all $s\in S\cap C$ such
  that $f_C=\prod_{s\in S\cap C} (x-s)^{k_s}$ satisfies: for every $r$
  in a poor neighborhood of $\P_S$ contained in $C$, $\val(f_C(r))$
  takes the same value. The statement for $n=0$ proves the lemma,
  since then $C=R$.

  For $n=m$, $C\cap S$ contains a single element $s$, and ${k_s}=1$
  works.  Now let $C$ be a residue class of $M^n$ that is a union of
  blocks of $\P_S$. Either $C$ itself is a block of $\P_S$ (and we can
  set $k_s=1$ for the single element $s$ in $S\cap C$), or $C$ is the
  disjoint union of $C_i$ for $1\le i\le q$, each $C_i$ a residue
  class of $M^{n+1}$ that is a union of blocks of $\P_S$.

  In this case, we may assume, by induction hypothesis, that we have
  assigned a multiplicity $k_s$ to each $s\in S\cap C_i$ such that for
  each $i$, the polynomial
  \begin{equation*}
    f_i=\prod_{s\in S\cap C_i} (x-s)^{k_s}
  \end{equation*}
  satisfies: for every $r$ in a poor neighborhood of $\P_S$ contained in
  $C_i$, $\val(f_i(r))$ takes the same value, say, $a_i$.

  Further, note that, for each $t$ in any $C_j$ with $j\ne i$,
  $\val(f_i(t))$ takes the same value, say $e_i$, with $e_i<a_i$.  We
  set $d_i=a_i - e_i$ for $1\le i\le q$.

  Now let $c=\lcm(d_i\mid 1\le i\le q)$ and $c_i=\frac{c}{d_i}$.  For
  $s\in S\cap C_i$, set $h_s=c_i k_s $.  Then
  $f_C=\prod_{s\in S\cap C} (x-s)^{h_s} = \prod_{i=1}^q {f_i}^{c_i}$
  satisfies that $\val(f_C(r))$ takes the same value for all $r$ in any
  poor neighborhood of $\P_S$ contained in $C$.

  Indeed, if $r$ is in a poor neighborhood of $\P_S$ contained in
  $C_i$, then
  $v(f_C(r)) = \sum_{j=1}^q c_j v(f_j(r)) = c_i a_i + \sum_{j\ne i}
  c_j e_j$, where $c_i a_i = c_i (d_i + e_i) = c + c_i e_i$ and hence
  $v(f_C(r)) = c + \sum_{j=1}^q c_j e_j$, which does not depend on
  $i$.
\end{proof}

\subsection{The partition matrix is non-singular}

We assume here that $S$ is a balanced set with $|S|>1$, or
equivalently, the partition $\P_S$ contains more than one
block. (Otherwise, the partition matrix is $(0)$.  We treat this case
separately in the proof of Theorem~\ref{theorem:equpoly-exists} and
Proposition~\ref{proposition:unique-unimodular-solution}.)

\begin{lemma}\label{lemma:special-det}
Let $u_1$, $u_2$, \ldots, $u_n$, $b\in \QQ_{>0}$ be positive
  rationals such that $u_i > b$ for all $1\le i \le n$ and let
  \begin{equation*}
    A =
    \begin{pmatrix}
      u_1     & b   & \cdots  & b \\
      b       & u_2 & \cdots  & b \\
      \vdots  &     & \ddots  & \vdots  \\
      b      & b   & \cdots  & u_n \\
    \end{pmatrix}.
  \end{equation*}

  Then $\det(A) > 0$.
\end{lemma}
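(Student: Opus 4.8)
The plan is to exhibit $A$ as a sum of a positive definite diagonal matrix and a positive semidefinite rank-one matrix, from which positive definiteness—and hence a strictly positive determinant—follows at once. Concretely, I would first record the decomposition
\begin{equation*}
  A = D + b\,\one\one^\transpose, \qquad D = \diag(u_1 - b, \ldots, u_n - b),
\end{equation*}
where $\one$ denotes the all-ones column vector, so that $\one\one^\transpose$ is the $n\times n$ all-ones matrix. The hypothesis $u_i > b$ ensures that every diagonal entry $u_i - b$ of $D$ is strictly positive, whence $D$ is positive definite, while $b\,\one\one^\transpose$ is positive semidefinite because $b > 0$.

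The key step is to evaluate the associated quadratic form. For any nonzero vector $\vec{y} = (y_i)_{1\le i\le n}$ one computes
\begin{equation*}
  \vec{y}^\transpose A \vec{y} = \sum_{i=1}^n (u_i - b)\,y_i^2 + b\left(\sum_{i=1}^n y_i\right)^2 \ge \sum_{i=1}^n (u_i - b)\,y_i^2 > 0 ,
\end{equation*}
the first inequality using $b > 0$ and the second using $u_i - b > 0$. Thus the (real symmetric) matrix $A$ is positive definite, all of its eigenvalues are strictly positive, and therefore $\det(A) > 0$.

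I expect no genuine obstacle here; the only thing to get right is spotting the structural decomposition, after which the argument is a one-line quadratic-form estimate. As an alternative route that delivers an explicit value rather than mere positivity, one could instead apply the matrix determinant lemma to the rank-one update $A = D + b\,\one\one^\transpose$, obtaining
\begin{equation*}
  \det(A) = \left(\prod_{i=1}^n (u_i - b)\right)\left(1 + b\sum_{i=1}^n \frac{1}{u_i - b}\right),
\end{equation*}
in which each factor is manifestly positive since $u_i - b > 0$ and $b > 0$. Either formulation closes the proof, and I would use whichever is more convenient for the subsequent application in Proposition~\ref{proposition:detneq0}.
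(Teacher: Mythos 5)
Your proof is correct, but it takes a genuinely different route from the paper's. You write $A = D + b\,\one\one^{\transpose}$ with $D = \diag(u_1-b,\ldots,u_n-b)$ positive definite and the rank-one part positive semidefinite, conclude positive definiteness from the quadratic form $\vec{y}^{\transpose}A\vec{y} = \sum_i (u_i-b)y_i^2 + b\bigl(\sum_i y_i\bigr)^2 > 0$, and hence $\det(A)>0$; your alternative via the matrix determinant lemma, $\det(A) = \prod_i(u_i-b)\cdot\bigl(1+b\sum_i \tfrac{1}{u_i-b}\bigr)$, is also valid and even gives the explicit value. The paper instead argues by induction on $n$: it performs one step of Gaussian elimination (adding $-b/u_1$ times the first row to the others), checks that the resulting $(n-1)\times(n-1)$ block $\hat A$ again satisfies the hypotheses of the lemma because $u_i - b^2/u_1 > b - b^2/u_1 > 0$, and concludes $\det(A) = u_1\det(\hat A) > 0$. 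The paper's argument is entirely elementary (no spectral theory) and its induction-with-preserved-hypothesis pattern matches the style of the subsequent Proposition~\ref{proposition:detneq0}; your argument is shorter, yields the strictly stronger conclusion that $A$ is positive definite, and the determinant-lemma variant produces a closed-form expression. One small point to keep explicit if you use the eigenvalue phrasing: positivity of $\det(A)$ follows because a real symmetric positive definite matrix has all eigenvalues positive and the determinant is their product — the quadratic-form computation alone only gives $\det(A)\neq 0$; your explicit formula sidesteps this entirely.
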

\begin{proof}
  We prove by induction on $n$. If $n=1$, then $\det(A) = u_1 >0$
  which proves the basis.

  For $n\ge 2$, we eliminate the off-diagonal entries in the first
  column by adding the $\frac{-b}{u_1}$-fold of the first row to all
  other rows. The resulting matrix is
  \begin{equation*}
    \begin{pmatrix}
      u_1     & b  \,\cdots\,   b \\
      0       & \hat A
    \end{pmatrix}
  \end{equation*}
  where
  \begin{equation*}
    \hat A =
    \begin{pmatrix}
     u_2-\frac{b^2}{u_1} & \cdots  & b-\frac{b^2}{u_1} \\
         & \ddots  & \vdots  \\
     b-\frac{b^2}{u_1}   & \cdots  & u_n-\frac{b^2}{u_1} \\
    \end{pmatrix}.
  \end{equation*}
  Since $u_i > b >0$ for all $1\le i \le n$, it follows that
  \begin{equation*}
    u_i - \frac{b^2}{u_1} > b-\frac{b^2}{u_1} > 0.
  \end{equation*}
  Therefore $\hat A$ satisfies the assumptions of the lemma and by
  induction hypothesis it follows that $\det(\hat A) >0$ and
  \begin{equation*}
    \det(A) = u_1\det(\hat A) >0.
  \end{equation*}
\end{proof}

\begin{notation} For $n$, $t\in \N$, we write
  $\bfE_{n,t} = (1)_{\substack{1\le i \le n\\ 1 \le j \le t}}$ for the
  $(n\times t)$-matrix all of whose entries are 1. Note that
  $\one_n = \bfE_{n,1}$.
\end{notation}

\begin{lemma}\label{lemma:liftblock}
  Let $A\in \QQ^{n\times n}$ such that the equation system
  $A\vec{x} = \one_n$ has a solution over the positive rationals.

  If $\det(A) \neq 0$, then $\det(A+\bfE_{n,n}) \neq 0$.
\end{lemma}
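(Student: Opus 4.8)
The plan is to exploit the rank-one structure of the all-ones matrix and apply the matrix determinant lemma. Observe that $\bfE_{n,n} = \one_n\one_n^{\transpose}$ is the outer product of the all-ones column vector with itself, hence has rank one, so that $A + \bfE_{n,n} = A + \one_n\one_n^{\transpose}$ is a rank-one update of $A$. Since $\det(A)\neq 0$, the matrix $A$ is invertible, and the matrix determinant lemma yields
\[
  \det(A + \one_n\one_n^{\transpose}) = \det(A)\,\bigl(1 + \one_n^{\transpose} A^{-1}\one_n\bigr).
\]
Thus $\det(A + \bfE_{n,n})$ vanishes if and only if the scalar correction factor $1 + \one_n^{\transpose}A^{-1}\one_n$ does, and it remains only to control the sign of that factor.

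Here the positivity hypothesis enters. Because $A$ is invertible, the system $A\x = \one_n$ has a unique solution $\x_0 = A^{-1}\one_n$, and by assumption all of its coordinates are positive rationals. The scalar $\one_n^{\transpose}A^{-1}\one_n = \one_n^{\transpose}\x_0$ is precisely the sum of these coordinates, hence strictly positive. Consequently $1 + \one_n^{\transpose}A^{-1}\one_n > 1 > 0$, so the correction factor is nonzero and therefore $\det(A + \bfE_{n,n}) \neq 0$, as desired.

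The only genuine ingredient is the matrix determinant lemma; everything else is immediate, so I do not expect a real obstacle. Should we wish to avoid citing it, the same conclusion follows by a direct kernel argument: if $(A + \one_n\one_n^{\transpose})\vec{y} = 0$ for some $\vec{y}$, then $A\vec{y} = -(\one_n^{\transpose}\vec{y})\one_n$, whence $\vec{y} = -(\one_n^{\transpose}\vec{y})\x_0$ with $\x_0 = A^{-1}\one_n$ having positive entries. Writing $\lambda = \one_n^{\transpose}\vec{y}$ and applying $\one_n^{\transpose}$ gives $\lambda\bigl(1 + \one_n^{\transpose}\x_0\bigr) = 0$; since $1 + \one_n^{\transpose}\x_0 > 0$, this forces $\lambda = 0$ and hence $\vec{y} = 0$. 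Either route makes clear that the crucial point is that positivity of the solution keeps the correction term bounded away from zero.
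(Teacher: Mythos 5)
Your proof is correct, and it takes a genuinely more direct route than the paper. The paper also argues via the kernel: it writes $\one_n=\sum_j u_j\vec{a}_j$ using the positive solution $\vec{u}$, converts a vanishing linear combination of the columns of $A+\bfE_{n,n}$ into the homogeneous system $\lambda_j+u_j\sum_i\lambda_i=0$, and then proves that the coefficient matrix $U=I+\vec{u}\,\one_n^{\transpose}$ is regular by an inductive determinant computation carried out in a separate auxiliary lemma (Lemma~\ref{lemma:special-det}, on matrices with constant off-diagonal entry dominated by the diagonal). You instead collapse everything to a single scalar: either by the matrix determinant lemma, $\det(A+\one_n\one_n^{\transpose})=\det(A)\bigl(1+\one_n^{\transpose}A^{-1}\one_n\bigr)$, or by applying $\one_n^{\transpose}$ to the kernel relation to get $\lambda\bigl(1+\one_n^{\transpose}\x_0\bigr)=0$. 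In both of your variants the key quantity is $1+\sum_i u_i>0$, which is exactly what the paper's Lemma~\ref{lemma:special-det} computes (in disguise) as $\det(U)$; your argument obtains it in one line and makes that auxiliary lemma unnecessary. One hypothesis you use correctly but should keep explicit: invertibility of $A$ is needed to identify the assumed positive solution with $A^{-1}\one_n$, since the statement only posits existence of some positive solution. Both approaches are valid; yours is shorter and isolates the real mechanism (positivity of the solution forces the rank-one correction factor away from zero), while the paper's version is self-contained at the cost of the extra determinant lemma.
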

\begin{proof}
  Let $\vec{u} = (u_i)_{1\le i \le n} \in \QQ_{>0}^n$ with
  $A\vec{u} = \one_n$ and let $\vec{a}_j$ denote the $j$-th column of
  $A$ for $1\le j\le n$, that is, $\sum_{j=1}^nu_j\vec{a}_j = \one_n$.

  Let $\lambda_1$, \ldots, $\lambda_n\in \QQ$ such that
  \begin{align}
    \label{eq:lincomb}
    0
    &= \sum_{j=1}^n\lambda_j\left(\vec{a}_j + \one_n\right)
    = \sum_{j=1}^n\lambda_j\vec{a}_j + \left(\sum_{i=1}^n\lambda_i\right)\one_n \\
    &= \sum_{j=1}^n\lambda_j\vec{a}_j + \left(\sum_{i=1}^n\lambda_i\right)\left(\sum_{j=1}^nu_j\vec{a}_j\right) \\
    &= \sum_{j=1}^n\left(\lambda_j + u_j\sum_{i=1}^n\lambda_i\right)\vec{a}_j
  \end{align}
  Since $\det(A) \neq 0$, it follows that
  \begin{equation*}
    \lambda_j + u_j\sum_{i=1}^n\lambda_i
    = \lambda_j(1+u_j) + u_j\sum_{\substack{i=1 \\ i \neq j}}^n\lambda_i= 0
  \end{equation*}
  for all $1\le j \le n$. In other words,
  $(\lambda_1, \ldots, \lambda_n)$ is a solution of the homogeneous
  linear equation system $U\vec{x} = 0$ where
  \begin{equation*}
    U =
    \begin{pmatrix}
       u_1 +1 & u_1    & \cdots & u_1  \\
       u_2    & u_2 +1 & \cdots & u_2  \\
       \vdots &        & \ddots & \vdots  \\
       u_n    & u_n    & \cdots & u_n+1  \\
     \end{pmatrix}.
   \end{equation*}
   Since $u_i > 0$ for $1\le i \le n$ and
   \begin{equation*}
     \det(U)
     = u_1u_2\cdots u_n \cdot \det
    \begin{pmatrix}
       1 + \frac{1}{u_1} & 1                 & \cdots & 1  \\
       1                 & 1 + \frac{1}{u_2} & \cdots & 1  \\
       \vdots            &                   & \ddots & \vdots  \\
       1                 & 1                 & \cdots & 1 + \frac{1}{u_n}  \\
     \end{pmatrix}
   \end{equation*}
   it follows from Lemma~\ref{lemma:special-det} that $\det(U) \neq
   0$. We conclude that $\lambda_1 = \cdots = \lambda_n = 0$ and
   therefore the columns of $A + \bfE_{n,n}$ are linearly independent.
\end{proof}


\begin{proposition}\label{proposition:detneq0}
  Let $(R,M)$ be as in Convention~\ref{conventionRD}. Further, let
  $\P$ be an $M$-adic partition of $R$ consisting of more than one
  block and $A_\P$ the partition matrix of $\P$.

  Then $\det(A_\P) \neq 0$.
\end{proposition}
\begin{proof}
  Let $S$ denote a set of representatives of the blocks of $\P$. Then
  $S$ is balanced, $|S|>1$ by assumption, $\P = \P_S$ and the $S$ can
  be used as index set for the rows and columns of $A$.

  Let $n\in \N$ be maximal such that $s+M^n$ is a partition block of
  $\P$. We prove the assertion by induction on $n$.  If $n=1$, then
  $S$ is a set of representatives of the residue classes of $R$ modulo
  $M$, $a_{s,s} = 1$ and $a_{s,t} = 0$ for all $s\neq t\in S$. Hence
  $A$ is the identity matrix and $\det(A)>0$.

  Now let $n\ge 2$. As index set of the rows and columns we assume
  that $S$ is endowed with a fixed linear ordering of its elements in
  such a way that blocks contained in the same residue class of $M$
  are adjacent. This makes the matrix a block diagonal matrix with
  blocks $B_1$, \ldots, $B_q$ each block belonging to one residue
  class of $M$. Since $\det(A) = \det(B_1)\cdots \det(B_q)$, it
  suffices to show that the determinants of each block are positive.

  We fix $i$ and set $B = B_i$ and let $s$ be the size of $B$ and
  $\tilde S \subseteq S$ the subset of $S$ that serves as index set
  for $B$. If $|\tilde S| = 1$, then $B = (1)$ and hence
  $\det(B) \neq 0$. So assume that $|\tilde S|> 1$.

  We now construct a new partition $\tilde\calP$ of $R$ by
  (bijectively) mapping the elements of $\tilde S$ to a set of
  representatives $T$ of $\tilde\P$. We do this as follows.  Let
  $a_1$, \ldots, $a_q$ be a set of representatives of $R$ modulo $M$
  and let $t\in R$ with $M = tR$ be a uniformizer of $R$. Then for
  each $n\in \N$ and each residue class of $M^n$ there is a
  uniquely determined representative of the form
  \begin{equation*}
    \sum_{j=0}^{n-1} a_{i_j}t^j \text{ with }1\le i_0, \ldots, i_{n-1} \le q.
  \end{equation*}

  Without restriction, let $a_1$ be the representative modulo $M$ of
  all the elements in $\tilde S$.  We define the map
  \begin{align*}
    \ell\colon \bigcup_{n\ge 1} R/M^n &\to \bigcup_{n\ge 0} R/M^n \\
    \left(a_1 + \sum_{j=1}^{n-1}a_{i_j}t^j\right) + M^n &\mapsto \left(\sum_{j=1}^{n-1}a_{i_j}t^{j-1}\right) + M^{n-1}
  \end{align*}
  which bijectively maps the residue classes of $R$ modulo $M^n$ with
  $n>1$ contained in $a_1 + M$ to all residue classes of $R$ modulo
  $M^{n-1}$. For each $s\in \tilde S$, let $t(s)\in R$ such that
  \begin{equation*}
    \ell\!\left(s + M^{\rho_S(s)}\right) = t(s) + M^{\rho_S(s)-1}
  \end{equation*}
  and set
  \begin{equation*}
    T = \{t(s) \mid s\in \tilde S\}.
  \end{equation*}
  Since $\P_S$ is a partition of $R$, it follows that
  \begin{equation}
    a_1+M =  \bigcup_{s\in \tilde S} s + M^{\rho_S(s)}
  \end{equation}
  and hence
  \begin{equation*}
    \tilde\calP
    = \left\{ \ell\!\left(s + M^{\rho_S(s)}\right) \mid s\in \tilde S \right\}
    = \left\{ t + M^{\rho_S(s)-1} \mid t\in T \right\}
  \end{equation*}
  is an $M$-adic partition of $R$. By construction, $T$ is a set of
  representatives of $\tilde\calP$ and $T$ is a balanced set with
  $|T|=|\tilde S|>1$. Moreover, $\rho_T(t(s)) = \rho_S(s)-1 = b_{s,s}-1$
  and
  $\val(t(s) - t(\tilde s)) = \val(s - \tilde s) -1 = b_{s, \tilde s}
  - 1$ for all $s$, $\tilde s\in \tilde S$.

  Therefore, $B - \bfE_{|T|,|T|}$ is the partition matrix of
  $\tilde \calP$ and since
  $\max_{t\in T} \rho_T(t) < \max_{s\in \tilde S}\rho_S(s)$ it follows
  by induction that $\det(B - \bfE_{|T|,|T|}) \neq 0$.

  Moreover, by Proposition~\ref{Bpositive_lemma}, there exists a
  positive solution vector to the system
  $(B - \bfE_{|T|,|T|})\vec{x} = \one_s$. Hence $\det(B) \neq 0$
  according to Lemma~\ref{lemma:liftblock} which completes the proof.
\end{proof}

\subsection{The equalizing polynomial}

\begin{definition}
  An integer vector $(m_i)_{1\le i \le n} \in \Z^n$ is called
  \emph{unimodular} if
  \begin{equation*}
    \gcd(m_i\mid 1\le i\le n) = 1.
  \end{equation*}
\end{definition}

\begin{proposition}\label{proposition:unique-unimodular-solution}
  Let $(R,M)$ be as in Convention~\ref{conventionRD}, $S$ be a
  balanced subset of $R$, $n=|S|$ and $A_{\P_S}$ the partition matrix
  of the partition $\P_S$ associated to $S$.

  Then there exists a uniquely determined unimodular solution
  $(m_s)_{s\in S}\in \N^{n}$ of $A_{\P_S}\vec{x} = e\one_n$ with
  $e\in \N_0$.

  In addition, if $(k_s)\in \N^{n}$ is a solution to
  $A_{\P_S}\vec{x} = \tilde e\one_n$ with $\tilde e\in \N_0$, then
  $k_s = \ell m_s$ for all $s\in S$ and $\tilde e = \ell e$ for some
  $\ell \in \N$.
\end{proposition}
\begin{proof}
  By Proposition~\ref{Bpositive_lemma}, there exist $(m_s)_{s\in S}$
  such that $f = \prod_{s\in S}(x-s)^{m_s}$ is a polynomial for
  which $\val(f(w)) = e \in \N_0$ is the same for each $w\in R \setminus \R
  f$. Therefore, by Lemma~\ref{FR_lemma}, $(m_s)_{s\in S}$ is a
  solution of the linear equation system $A_{\P_S} \vec{x} = e\one_n$.
  Common integer factors of the coordinates of $(m_s)_{s\in S}$
  necessarily divide $e$.  By cancelling them out we can assume that
  $(m_s)_{s\in S}$ is a unimodular vector (and $e\in \N_0$).

  If $n=1$, then $A_{\P_S} = (0)$ which implies that $e=0$ and $(1)$
  is the uniquely determined unimodular integer solution to the
  equation system. The second assertion of the proposition immediately
  follows.
  
  From now on we assume that $n>1$ and hence $e>0$ (see
  Lemma~\ref{FR_lemma}(5)). Further, let $(k_s)\in \N^{n}$ is a
  solution vector to $A_{\P_S}\vec{x} = \tilde e\one_n$ with
  $\tilde e\in \N_0$.
  
  Let $c = \gcd(e, \tilde e)$ and write
  $e = cd$ and $\tilde e = c \ell$ for suitable (coprime) positive
  integers $d$ and $\ell$. Then
  \begin{equation*}
    A_{P_S}\left(\frac{m_s}{d}\right)_{s\in S}
    = c\one_{n}
    =  A_{P_S}\left(\frac{k_s}{\ell}\right)_{s\in S}.
  \end{equation*}

  By assumption $n>1$ and matrix $A_{\P}$ has non-zero determinant by
  Proposition~\ref{proposition:detneq0}. Therefore
  $\left(\frac{m_s}{d}\right)_{s\in S} =
  \left(\frac{k_s}{\ell}\right)_{s\in S}$. In other words,
  \begin{equation*}
    \ell m_s = dk_s \text{ for all } s\in S
  \end{equation*}
  and since $d$ and $\ell$ are coprime, it follows that $d \mid m_s$
  for all $s\in S$. As $(m_s)_{s\in S}$ is a unimodular vector, it
  follows that $d=1$, $\ell m_s = k_s$ for all $s\in S$ and
  $\tilde e = \ell e$.
\end{proof}

\begin{definition}\label{definition:equ-poly}
  Let $(R,M)$ be as in Convention~\ref{conventionRD} and let
  $S \subseteq R$ be a balanced set, $n = |S|$ and $A$ the partition
  matrix of the partition associated to $S$
  (Definition~\ref{matrix_def}). We define the \emph{equalizing
    polynomial of $S$} as
  \begin{equation*}
    f= \prod_{s\in S}(x-s)^{m_s} 
  \end{equation*}
  where $(m_s)_{s\in S}\in \N^{n}$ is the uniquely determined
  unimodular solution over the positive integers to
  $A\vec{x} = e\one_{n}$ with $e\in \N_0$.
\end{definition}

\begin{remark}\label{rem:csr-equpoly}
  In the special case where $S$ is a complete set of residues of
  $M^k$, all roots of the equalizing polynomials are simple roots. The
  vector of multiplicities~$\one_n$ is a solution of $A\x = e\one_n$
  for some $e\in \N$ because every row contains the same elements in a
  different order. It is therefore the unique unimodular solution.
\end{remark}

 \begin{lemma}\label{lemma:equpoly-rich-posh}
   Let $(R,M)$ be as in Convention~\ref{conventionRD}, $S\subseteq R$
   a balanced subset, $f$ the equalizing polynomial of $S$.

  Then $\R {f} = \F {f}$.
\end{lemma}
\begin{proof}
  This follows from the definition of the equalizing polynomial and
  Lemma~\ref{FR_lemma}.
\end{proof}

By means of the equalizing polynomial, we are now ready to show that
every balanced set occurs as the root set of a split absolutely irreducible
polynomial.

\begin{theorem}\label{theorem:equpoly-exists}
  Let $R$ be a discrete valuation domain with finite residue field,
  $S\subseteq R$ a balanced subset, $f$ the equalizing polynomial of
  $S$ and $\fixdiv_{f}$ a generator of the fixed divisor of $f$.

  Then the essentially unique image-primitive polynomial
  $F\in \Int(R)$ associated in $K[x]$ to~$f$, namely
  $F = \fixdiv_{f}^{-1}f$, is absolutely irreducible.
\end{theorem}

\begin{proof}
  $\R {f} = \F {f}$ holds by Lemma~\ref{lemma:equpoly-rich-posh} and
  $\sigma(\F {f}) =\frac{1}{q}$ by Lemma~\ref{RF_lemma}.

  Let $n=|S|$. First assume that $n=1$. Then $f = x-s$ and hence
  absolutely irreducible.

  From now on, assume $n>1$. We use
  Proposition~\ref{proposition:special-abs-irr} to show that $F$ is
  absolutely irreducible. Let $g\in K[x]$ be monic with $g\neq 1$
  satisfying $\Q {g}\subseteq \Q{f}$. This is equivalent to
  $g = \prod_{t\in T}(x-t)^{k_t}$ for $\emptyset \neq T\subseteq S$
  and $k_t\in \N$ for $t\in T$ (cf.~Remark~\ref{remark:split-and-Q}).
  Also assume that $\F {g}\subseteq \F{f}$; we show that $g$ is a
  power of $f$.
  
  Since $\F g \subseteq \F f$ and
  $\sigma(\F{g})\ge\frac{1}{q} = \sigma(\F {f})$ holds by
  Lemma~\ref{RF_lemma}, it follows that $\F g = \F f$ (see
  Remark~\ref{remark:sigma-incl}(2)).
  Using Lemma~\ref{RF_lemma}, we conclude that
  \begin{equation*}
    \R{g} = \F{g} = \F{f} = \R{f}.
  \end{equation*}
  On one hand, this implies that $S = T$ since no  proper subset
  of balanced set can have the same rich set, see
  Remark~\ref{remark:balanced-sigma}(2).

  On the other hand, by Lemma~\ref{FR_lemma}, $\R {g} = \F {g}$
  implies that $(k_s)_{s\in S}$ is a solution to
  $A\vec{x} = e\one_{n}$ for some positive integer
  $e \in \N$ where $A$ denotes the partition matrix of the
  partition associated to $S$. By
  Proposition~\ref{proposition:unique-unimodular-solution} it follows
  that $k_s = \ell m_s$ with $\ell\in \N$ for all $s\in S$, that is,
  $g = f^\ell$. 
\end{proof}


Theorem~\ref{theorem:equpoly-exists} also yields sufficient conditions
for absolute irreducibility in the global case.

\begin{corollary}\label{corollary:globalizing}
  Let $D$ be a Dedekind domain, $f = \prod_{s\in S}(x-s)^{m_s}$ for a
  finite subset $S$ of $D$ and $m_s\in \N$ for $s\in S$ such that the
  fixed divisor of $f$ is a principal ideal, generated by
  $\fixdiv_{f}\in D$.

  If there exists a prime ideal $P$ of $D$ with finite residue field
  such that $S$ is $(D_P,P_P)$-balanced and $f$ is the equalizing
  polynomial of $S$, then $\fixdiv_{f}^{-1}f$ is absolutely
  irreducible in $\Int(D)$.
\end{corollary}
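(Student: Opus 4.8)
The plan is to reduce the global statement to the local result Theorem~\ref{theorem:equpoly-exists} by localizing at $P$, and then to push absolute irreducibility back from $\Int(D_P)$ to $\Int(D)$ by a factorization-localization argument. The hard part will be precisely this last transfer: $F=\fixdiv_{f}^{-1}f$ is in general \emph{not} irreducible, let alone absolutely irreducible, over the other localizations $D_Q$, so there is no hope of a naive ``locally absolutely irreducible everywhere'' argument. The leverage will instead come from combining local factoriality at the single prime $P$ with the fact that \emph{every} power $F^n$ remains image-primitive over $D$.

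First I would record the local input. Localizing at $P$ yields the discrete valuation domain $(D_P,P_P)$ with finite residue field, and $\Int(D)\subseteq \Int(D_P)$ (because $D$ is dense in $D_P$ in the $P$-adic topology, so every $f$ with $f(D)\subseteq D$ satisfies $f(D_P)\subseteq D_P$). The $P$-component of the fixed divisor of $f$ over $D$ coincides with the fixed divisor of $f$ computed over $D_P$; hence a local generator $\fixdiv_{f,P}$ and the global generator $\fixdiv_{f}$ differ only by a unit of $D_P$, so $F$ equals $\fixdiv_{f,P}^{-1}f$ up to a unit of $D_P$. Since $S$ is $(D_P,P_P)$-balanced and $f$ is its equalizing polynomial, Theorem~\ref{theorem:equpoly-exists} shows that $\fixdiv_{f,P}^{-1}f$, and therefore $F$, is absolutely irreducible in $\Int(D_P)$; equivalently, the monadic submonoid $[\![F]\!]$ computed over $D_P$ is factorial with $F$ as its unique atom.

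Next I would establish that all powers of $F$ stay image-primitive over $D$. Because the fixed divisor of $F$ is the unit ideal, $\min_{a\in D}v_Q(F(a))=0$ for every prime $Q$ of $D$, and since $v_Q(F^n(a))=n\,v_Q(F(a))$ we get $\min_{a\in D}v_Q(F^n(a))=0$ for all $Q$ and all $n$. Thus $F^n$ is image-primitive in $\Int(D)$ for every $n$; in particular no non-unit constant of $D$ can divide any $F^n$ in $\Int(D)$.

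Finally I would run the transfer. Working inside $[\![F]\!]$, which is Krull by Frisch's extension to Krull (hence Dedekind) domains and therefore atomic, it suffices to show every irreducible element of $[\![F]\!]$ is associated to $F$. Let $g$ be such an element, so $g\mid F^n$ in $\Int(D)$ for some $n$; viewing this in $\Int(D_P)$, where $[\![F]\!]$ is factorial with unique atom $F$, we obtain $g\assoc F^e$ in $\Int(D_P)$, which as an identity in $K[x]$ reads $g=uF^e$ with $u\in K^\times$ and $e\ge 0$. The case $e=0$ is impossible, since then $g=u$ would be a non-unit constant dividing the image-primitive $F^n$. For $e\ge 1$, image-primitivity of $F^e$ forces $u\in D$ (as $uF^e=g\in\Int(D)$); then if $e\ge 2$ we could write $g=F\cdot(uF^{e-1})$ as a product of two non-units of $\Int(D)$, contradicting irreducibility of $g$. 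Hence $e=1$, so $g=uF$ with $u\in D$, and because $g$ (a divisor of the image-primitive $F^n$) is itself image-primitive, $u\in D^\times$, i.e.\ $g\assoc F$. Therefore $[\![F]\!]$ is factorial and $F=\fixdiv_{f}^{-1}f$ is absolutely irreducible in $\Int(D)$.
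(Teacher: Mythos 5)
Your proposal is correct and follows essentially the same route as the paper: localize at $P$, apply Theorem~\ref{theorem:equpoly-exists} in $\Int(D_P)$, and then transfer back by writing any divisor of $F^n$ as $uF^e$ in $K[x]$ and using image-primitivity of the powers of $F$ to force $u\in D$ and ultimately $u\in D^\times$. The only difference is cosmetic packaging — you phrase the transfer via the monadic submonoid and cite its Krull property for atomicity, which is not actually needed, whereas the paper argues directly on a factorization $F^m=G_1G_2$.
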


\begin{proof}
  $\Int(D)$ behaves well under localization, that is,
  $\Int(D)_P = \Int(D_P)$ holds for every Noetherian domain $D$,
  cf.~\cite[Theorem~I.2.3]{Cahen-Chabert:1997:book}. Therefore
  $F = \fixdiv_{f}^{-1}f \in \Int(D)$ is an element of $\Int(D_P)$ and
  absolutely irreducible in $\Int(D_P)$ by
  Theorem~\ref{theorem:equpoly-exists}.

  Now, let $m\in \N$ and assume that $F^m = G_1G_2$ for non-constant
  polynomials $G_1$ and $G_2$ in $\Int(D)$.  It follows from the
  absolute irreducibility of $f$ in $\Int(D_{P})$ that, for $i=1,2$,
  there exist integers $e_i\ge 1$ with $e_1 + e_2 = m$ and non-zero
  elements $u_i$ in the quotient field of $D$ with $u_1u_2 = 1$ such
  that
  \begin{equation*}
    G_i = u_iF^{e_i}.
  \end{equation*}
  Now, since $F^{e_i}$ is image-primitive and $G_i \in \Int(D)$, it
  follows that $u_i \in D$ for $i=1,2$. Since $u_1u_2=1$, $G_i$ is
  associated to $F^{e_i}$ in $\Int(D)$ for $i=1,2$. The assertion
  follows.
\end{proof}

\section{Characterization of split absolutely irreducible polynomials}
\label{sec:char}
We now give a completely general characterization of absolutely
irreducible polynomials in $\IntR$ which split over $K$. First, we
cover those whose roots are in~$R$; and, finally, all split absolutely
irreducible integer-valued polynomials are characterized in
Corollary~\ref{corollary:root-set-K-R}.

\begin{theorem}\label{characterization_thm}
  Let $R$ be a discrete valuation domain with finite residue field and
  $K$ its quotient field.  Let
  \[
    f = \prod_{s\in S} (x-s)^{m_s} \quad{\text{and}}\quad F = c^{-1}
    f,
  \]
  where $\emptyset\ne S\subseteq R$ is a finite set and for each
  $s\in S$, $m_s \in \N$ a positive integer and
  $c\in K\setminus\{0\}$.

  Then $F$ is absolutely irreducible in $\IntR$ if and only if
  \begin{enumerate}
  \item $S$ is balanced.
  \item $f$ is the equalizing polynomial of $S$.
  \item $c$ is a generator of the fixed divisor of $f$.
  \end{enumerate}
\end{theorem}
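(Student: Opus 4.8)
The plan is to treat the two implications separately, with sufficiency reducing essentially to Theorem~\ref{theorem:equpoly-exists} and necessity carrying the substance. For sufficiency, assume (1)--(3). Then $S$ is balanced and $f$ is its equalizing polynomial, so by Theorem~\ref{theorem:equpoly-exists} the polynomial $\fixdiv_f^{-1}f$ is absolutely irreducible, where $\fixdiv_f$ generates the fixed divisor of $f$. By (3), $c$ also generates the fixed divisor, hence $c$ and $\fixdiv_f$ are associates in $R$, and $F = c^{-1}f$ differs from $\fixdiv_f^{-1}f$ only by a unit factor. Since absolute irreducibility is preserved under multiplication by units, $F$ is absolutely irreducible.

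For necessity, assume $F$ is absolutely irreducible; I would recover the three conditions in the order (3), (1), (2). Condition (3) is the quickest: an absolutely irreducible element is in particular irreducible, and over the principal ideal domain $R$ every irreducible element of $\IntR$ is image-primitive. Image-primitivity of $F = c^{-1}f$ means that its fixed divisor is a unit, which forces $v(c) = \min_{a\in R} v(f(a))$; that is, $c$ generates the fixed divisor of $f$, which is precisely (3) (and in particular $c\in R$, since this valuation is nonnegative).

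For (1) I would argue by contraposition, using the prime-separation characterization of absolute irreducibility from Section~\ref{sec:preliminaries} together with the correspondence, developed in Sections~\ref{sec:fat-set}, \ref{section:partition}, and~\ref{subsection:small-fat-sets} via posh and rich sets, between the maximal ideals of $\IntR$ containing $F$ and the distribution of the roots $s\in S$ among residue classes of powers of $M$. If $S$ is not balanced, the minimal separating neighborhoods $s+M^{n_s}$ do not cover $R$, so some residue class is avoided by all of them; I would then show that such an uncovered class gives rise to maximal ideals that the separation criterion cannot separate, equivalently to a nontrivial factorization of some power $F^n$, contradicting absolute irreducibility. Converting a covering failure into an explicit splitting of $F^n$ is the main obstacle of the proof, and it is exactly where the structural results on posh and rich sets do the work.

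Finally, with $S$ now known to be balanced, I would use that the separation criterion, applied to the finitely many relevant primes, is equivalent to a linear system in the multiplicity vector $(m_s)_{s\in S}$, and that this system is non-singular (the same non-singularity exploited in constructing the equalizing polynomial). Since $F$ is absolutely irreducible, $(m_s)$ solves the system; non-singularity then pins $(m_s)$ down as the unique multiplicity vector of the equalizing polynomial of $S$, so $f$ is that equalizing polynomial, giving (2) and completing the proof.
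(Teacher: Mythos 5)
Your sufficiency direction and your recovery of condition (3) are fine and match the paper. The problems are in the necessity of (1) and (2). For (1), you describe the goal --- ``convert a covering failure into an explicit splitting of $F^n$'' --- and then defer it to ``the structural results on posh and rich sets,'' but this deferred step is precisely the content of the proof, and the separation criterion (Corollary~\ref{general-abs-irred_cor}) requires you to \emph{exhibit} a witness $G$ with $\M G\cup\Q G\subseteq \M F\cup\Q F$ not associated to a power of $F$; an ``uncovered residue class'' does not by itself produce one. The paper's mechanism, which you never identify, is to take the balanced subset $S'\subseteq S$ with $\P_{S'}=\P_S$ guaranteed by Lemma~\ref{lemma:S-partition} and use the equalizing polynomial $g$ of $S'$ as the witness: one has $\Q{g}\subseteq\Q{f}$ (since $S'\subseteq S$) and $\F{g}=\R{S'}\subseteq\R{S}\subseteq\F{f}$ (by Lemma~\ref{lemma:equpoly-rich-posh} and Lemma~\ref{lemma:RF_lemma-1}), so Corollary~\ref{strict} forces both inclusions to be equalities, and $\Q{g}=\Q{f}$ yields $S=S'$ balanced. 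Without this (or an equivalent) construction your contrapositive argument for (1) is a statement of intent, not a proof.

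For (2) there is a concrete error. First, the premise ``since $F$ is absolutely irreducible, $(m_s)$ solves the system'' needs the intermediate fact $\F{f}=\R{f}$ (Lemma~\ref{FR_lemma}), which again comes only from the witness argument above. Second, even granting that, non-singularity of $A_{\P_S}$ does \emph{not} pin down $(m_s)_{s\in S}$: the right-hand side $e\one_{|S|}$ is not fixed in advance, so by Proposition~\ref{proposition:unique-unimodular-solution} the positive integer solutions form the one-parameter family $\ell\cdot(m_s^{\mathrm{eq}})_{s\in S}$, $\ell\in\N$, and you only get $f=f_S^{\ell}$. You must then invoke irreducibility of $F$ once more --- via Lemma~\ref{power_lemma}, $f=f_S^{\ell}$ gives $F\assoc G^{\ell}$ for the image-primitive associate $G$ of $f_S$, so $\ell=1$. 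As written, your final step would ``prove'' that no proper power of an equalizing polynomial can have absolutely irreducible image-primitive associate by uniqueness of the solution alone, which is not what the linear algebra delivers.
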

\begin{proof}
  The equivalence is trivially true when $|S|=1$. Now assume $|S|>1$.

  If $S$ is a balanced set, $f = f_S$ its equalizing polynomial and
  $c$ a generator of the fixed divisor of $f_S$, then $F$ is
  absolutely irreducible by Theorem~\ref{theorem:equpoly-exists}.

  Conversely, assume that $F$ is absolutely irreducible.  Let $\P_S$ be
  the partition associated to $S$.  Let $S'\subseteq S$ be a balanced
  set associated to $S$,
  cf.~Definition~\ref{definition:assoc-balanced}. Then $S'$ is a
  balanced set such that $\P_{S'} = \P_S$ and
  $\R {S'} \subseteq \R S$, cf.~Remark~\ref{remark:rho}(4).

  First, we show that $S = S'$. Let $g$ be the equalizing polynomial
  of $S'$. Then, using Lemmas~\ref{lemma:equpoly-rich-posh}
  and~\ref{lemma:RF_lemma-1} we conclude that
  \begin{equation}\label{eq:FR-g0-f0}
    \F{g} =  \R{g} = \R{S'} \subseteq \R{S} = \R{f} \subseteq \F{f}
  \end{equation}
  holds. Moreover, $\Q {g} \subseteq \Q {f}$ since
  $S' \subseteq S$. By Corollary~\ref{strict}, $F$ being absolutely
  irreducible implies that $\Q {g} = \Q {f}$ and $\F {g} = \F {f}$.

  Therefore $S = S'$ is a balanced set and $g = f_S$ its equalizing
  polynomial. It follows from Theorem~\ref{theorem:equpoly-exists}
  that $G = \fixdiv_{g}^{-1}g$ is absolutely irreducible and $f$ is a
  polynomial with $\Q{f} = \Q{g}$ and $\F{f} = \F{g}$. We conclude by
  Proposition~\ref{proposition:special-abs-irr} that $f = g^n$ and,
  via Lemma~\ref{power_lemma}, $F \assoc G^n$ for some $n\in
  \N$. Since $F$ is absolutely irreducible it follows that $n=1$ and
  $f = g$ is the equalizing polynomial of $S$ and $c$ a generator of
  its fixed divisor.
\end{proof}

The combination of Theorem~\ref{characterization_thm} and
Theorem~\ref{theorem:equpoly-exists} establishes a bijection between
those absolutely irreducible polynomials in $\IntR$ that split over
$R$ and balanced subsets of $R$.

\begin{corollary}\label{corollary:bijectioncor}
  Let $(R,M)$ be as in Convention~\ref{conventionRD}. We identify
  polynomials in $\IntR$ that differ only by multiplication by units
  of $R$.

  The absolutely irreducible polynomials of $\IntR$ of the form
  \[
    f=c^{-1}\prod_{s\in S} (x-s)^{m_s}
  \]
  with $\emptyset\ne S\subseteq R$, each $m_s$ a positive integer, and
  $c\in K\setminus\{0\}$ correspond bijectively to the balanced sets
  $S\subseteq R$.

  The bijective correspondence is as follows: given an absolutely
  irreducible polynomial $f$, map $f$ to its set of roots $S$.
  Conversely, given a balanced finite set $S\subseteq R$, let $f$ be
  its equalizing polynomial and $\fixdiv_{f}\in R$ a generator of
  the fixed divisor of $f$, and map $S$ to
  $F = \fixdiv_{f}^{-1}f$, that is, to the essentially unique
  image-primitive polynomial associated in $K[x]$ to the equalizing
  polynomial of $S$.
\end{corollary}

\begin{corollary}\label{corollary:root-set-K-R}
  Let $(R,M)$ be as in Convention~\ref{conventionRD}. The absolutely
  irreducible polynomials in $\IntR$ that split over $K$ are
  \begin{enumerate}
  \item those whose roots are in $R$ as described in
    Corollary~\ref{corollary:bijectioncor} and
  \item linear polynomials $ax-b$ with $a\in M$ and
    $b\in R \setminus M$.
  \end{enumerate}
\end{corollary}
\begin{proof}
  Every $F\in \IntR$ is of the form $F = \frac{cf}{d}$ where
  $f\in R[x]$ is a primitive polynomial and $c$, $d\in R$ with
  $\gcd(c,d)=1$. Moreover, if $F$ splits over $K$, then $f$ is a
  product of linear factors of the form $ax-b$ for coprime elements
  $a$ and $b$ in $R$. Therefore, exactly one of $a$ and $b$ is in the
  maximal ideal $M$ while the other is a unit of $R$. If $a$ is a
  unit, then $a^{-1}b\in R$. If, however, $a\in M$, then $b$ is a unit
  and $ar-b$ is a unit for all $r \in R$. So, $ax-b$ by itself is
  absolutely irreducible and cannot be a factor of an
  irreducible integer-valued polynomial of degree strictly greater
  than $1$.

  It follows that the only split absolutely irreducible polynomials in
  $\IntR$ that are not covered by the bijection in
  Corollary~\ref{corollary:bijectioncor} are the linear polynomials of
  the form $ax-b$ where $a\in M$ and $b\in R\setminus M$. 
\end{proof}

\begin{corollary}\label{corollary:rootsetsize}
  Let $(R,M)$ be as in Convention~\ref{conventionRD} with $|R/M| = q$
  and $F\in \Int(R)$ be a split polynomial with root set
  $S\subseteq R$.

  If $F$ is absolutely irreducible, then $|S| \equiv 1 \mod q-1$.
\end{corollary}
\begin{proof}
  Note that the cardinality of a balanced set equals the number of
  blocks of the associated $M$-adic partition. It is apparent from the
  construction of $M$-adic partitions in Lemma~\ref{lemma:S-partition}
  that we can obtain any $M$-adic partition by starting with the
  single block $R$ and then repeatedly replacing a block that is a
  residue class of $M^n$ by the $q$ residue classes of $M^{n+1}$
  contained in it, which increases the number of blocks by $q-1$.
\end{proof}


\section{Application to generalized binomial polynomials}
\label{sec:appl-gener-binom}
In this section we discuss the absolute irreducibility of the
integer-valued polynomials whose root sets are initial sequences of
$P$-orderings. Such sequences were already considered by
Pólya~\cite{Polya:1919:regb} and Ostrowski~\cite{Ostrowski:1919:regb}
in their investigation of regular bases of rings of integer-valued
polynomials on rings of integers in number fields. In the literature,
they are also known as \emph{very well distributed and very well
  ordered} sequences, see~\cite[Ch.~2]{Cahen-Chabert:1997:book}. We
follow here the terminology introduced by
Bhargava~\cite[Section~2]{BM1997:P-orderings}.

\begin{definition}
  \label{def:p-orderings}
  Let $D$ be a Dedekind domain and $P$ a maximal ideal of $D$ and
  $\val_P$ the discrete valuation associated to $P$.  A
  \emph{$P$-ordering of $D$} is a sequence $(a_i)_{i\ge 0}$ in $D$ which
  satisfies
  \begin{equation*}
    \val_P\!\left(\prod_{i=0}^{k-1}(a_k - a_i)\right)
    \le \val_P\!\left(\prod_{i=0}^{k-1}(a - a_i)\right)
  \end{equation*}
  for all $k\ge 0$ and all $a\in D$.
\end{definition}

\begin{remark}
  The sequence $(i)_{i\ge 1}$ of consecutive natural numbers is a
  $p\Z$-ordering of the ring $\Z$ of integers for each prime number
  $p$.
\end{remark}
\begin{fact}[{Bhargava~\cite[Theorem~1, Lemma~3]{BM1997:P-orderings}}]
  The sequence of $P$-adic valuations
  \begin{equation*}
    \alpha_P(k) \coloneqq \val_P\!\left(\prod_{i=0}^{k-1}(a_k - a_i)\right)
  \end{equation*}
  does not depend on the choice of the $P$-ordering; it is intrinsic
  to $D$.  Moreover, for each $k\ge 0$, there are only finitely many
  prime ideals for which $\alpha_P(k) \neq 0$.
\end{fact}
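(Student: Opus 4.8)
The plan is to reduce the whole statement to a counting problem about how the terms of a $P$-ordering distribute among the residue classes of the powers of $P$, and thereby to extract an explicit formula for $\alpha_P(k)$ that visibly does not depend on the chosen ordering. Fix $P$ with $q = |D/P|$ (allowing $q = \infty$) and abbreviate $\val = \val_P$. Since $\val(a-b) \ge m$ is equivalent to $a \equiv b \pmod{P^m}$, for any start $a_0,\ldots,a_{k-1}$ of a $P$-ordering and any $a\in D$ one has
\[
  \val\Big(\prod_{i=0}^{k-1}(a-a_i)\Big) = \sum_{m\ge 1}\#\{\,i<k : a\equiv a_i \pmod{P^m}\,\} = \sum_{m\ge 1} o_m(a),
\]
where $o_m(a)$ is the number of the first $k$ terms lying in the class of $a$ modulo $P^m$. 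The defining inequality of a $P$-ordering says exactly that $a_k$ minimizes this sum, so $\alpha_P(k)$ equals that minimum.

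I would prove by induction on $k$ that the first $k$ terms of \emph{any} $P$-ordering are as evenly distributed as possible: for every $m$, each of the $q^m$ classes modulo $P^m$ contains either $\lfloor k/q^m\rfloor$ or $\lceil k/q^m\rceil$ of them; call such a finite set \emph{balanced}. Two points drive the induction. First, a balanced set of size $k$ admits a single element $a$ whose class modulo $P^m$ has the minimal occupancy $\lfloor k/q^m\rfloor$ for all $m$ at once; this is built as a nested chain of classes $C_1\supseteq C_2\supseteq\cdots$ and rests on the base-$q$ digit identity $\lfloor k/q^m\rfloor = q\lfloor k/q^{m+1}\rfloor + d_m$ with $0\le d_m<q$, which forces a minimal-occupancy class at level $m$ to contain a minimal-occupancy subclass at level $m+1$ (the chain stabilizes once $q^m>k$, so $a$ may be taken in $D$). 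Second, since the minimizing $a_k$ satisfies $o_m(a_k)=\lfloor k/q^m\rfloor$ for every $m$, adjoining it to a balanced set of size $k$ again yields a balanced set of size $k+1$. Consequently the minimizing sum equals
\[
  \alpha_P(k) = \sum_{m\ge 1}\Big\lfloor \frac{k}{q^m}\Big\rfloor
\]
(read as $0$ when $q=\infty$). As this depends only on $k$ and $q=|D/P|$, it is intrinsic to $D$, which is the first assertion.

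For the second assertion, this formula gives $\alpha_P(k)\ne 0$ exactly when $\lfloor k/q\rfloor\ge 1$, i.e.\ $|D/P|\le k$; equivalently, and even without the formula, any $P$-ordering has $\alpha_P(j)=0$ for $j<q$ because then some class modulo $P$ is still empty. It remains to show that only finitely many primes $P$ of the Dedekind domain $D$ have finite residue field of order at most $k$. Put $N=\lcm\{\,q-1 : 2\le q\le k\,\}$. Every nonzero element of a field of order $q\le k$ satisfies $y^{N}=1$, so every element satisfies $y^{N+1}=y$; hence $a^{N+1}-a\in P$ for all $a\in D$ and all such $P$, and these primes all contain the ideal $I=(\,a^{N+1}-a \mid a\in D\,)$. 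Since $D$ has a nonzero prime it is not a field, hence infinite, so $x^{N+1}-x$ (with at most $N+1$ roots in the domain $D$) is non-vanishing at some $a$, whence $I\ne 0$. A nonzero ideal of a Dedekind domain lies in only finitely many primes, so $\{\,P : \alpha_P(k)\ne 0\,\}$ is finite.

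The main obstacle is the first point of the induction: realizing the required even distribution at \emph{all} levels simultaneously by one element, i.e.\ the compatibility of the nested minimal-occupancy choices. This is precisely the structural fact that makes $\alpha_P$ intrinsic—the $P$-adic-tree phenomenon behind ``very well distributed'' sequences—and is closely analogous to the partition construction of Lemma~\ref{lemma:S-partition}. Everything else is bookkeeping with floors and the finiteness of the prime divisors of a nonzero ideal.
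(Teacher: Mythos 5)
Your proposal is correct. Note, however, that the paper does not prove this Fact at all: it is quoted verbatim from Bhargava with a citation, so there is no in-paper argument to compare against. What you supply is a complete, self-contained proof, and it is sound. The reduction of $\val_P\bigl(\prod_{i<k}(a-a_i)\bigr)$ to $\sum_{m\ge 1} o_m(a)$ is valid in a Dedekind domain (where $\val_P(a-b)\ge m$ is indeed equivalent to $a\equiv b \pmod{P^m}$), the induction showing that every initial segment of a $P$-ordering is ``balanced'' at all levels simultaneously is carried out correctly (both the existence of a nested chain of minimal-occupancy classes via $\lfloor k/q^m\rfloor = q\lfloor k/q^{m+1}\rfloor + d_m$, and the preservation of balancedness upon adjoining the minimizer $a_k$, check out in all cases, including $q^m \mid k+1$), and the resulting Legendre-type formula $\alpha_P(k)=\sum_{m\ge1}\lfloor k/q^m\rfloor$ is exactly Bhargava's, which the paper itself implicitly relies on later in Remark~\ref{remark:fixdiv}. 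Your finiteness argument --- that all primes with $|D/P|\le k$ contain the nonzero ideal generated by $a^{N+1}-a$ with $N=\lcm\{q-1: 2\le q\le k\}$, and a nonzero ideal of a Dedekind domain lies in only finitely many primes --- is a clean and correct way to get the second assertion; it is not in the paper and is a nice addition. The only point treated somewhat loosely is the infinite-residue-field case, where the digit identity does not literally apply, but there the argument degenerates to the trivial observation that some residue class modulo $P$ is unoccupied, so $\alpha_P(k)=0$; this is a cosmetic rather than substantive gap.
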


\begin{definition}
  \label{def:generalized-factorial}
  Let $D$ be a Dedekind domain and $k\in \N_0$. The \emph{generalized
    factorial of $k$} with respect to $D$ is defined as the ideal
  \begin{equation*}
    \genfac_D(k) = \prod_{P \in \max(D)} P^{\alpha_P(k)}
  \end{equation*}
  where $\max(D)$ denotes set of maximal ideals of $D$.
\end{definition}

\begin{remark}
  For $n\in \N$, the generalized factorial $\genfac_{\Z}(n) = n!\Z$ is
  the ideal generated by the usual factorial $n!$.
\end{remark}

\begin{remark}\label{remark:p-ordering-properties}
  Let $D$ be a Dedekind domain and $P$ a maximal ideal of $D$ whose
  residue field is of finite order $q$ and let $(a_i)_{i\ge 0}$ be a
  $P$-ordering and $n\in\N$. 
 \begin{enumerate}
  \item\label{item:valuations} Every choice of $q^n$ consecutive
    elements of $(a_i)_{i\ge 0}$ is a complete system of residues
    modulo $P^n$. Every complete system of residues modulo $P^n$ is
    the initial sequence of a $P$-ordering.
  \item\label{item:localize-p-orderings} The sequence $(a_i)_{i\ge 0}$
    is a $PD_P$-ordering of the localization $D_P$ of $D$ at $P$ and a
    $P\widehat D$-ordering of the $P$-adic completion $\widehat D$
    of $D$.
  \end{enumerate}
\end{remark}

\begin{remark}\label{remark:fixdiv}
  Note that for any $P$-ordering $(a_i)_{i\ge 0}$ 
  \begin{equation*}
    \val_P\!\left(\fixdiv\left(\prod_{i=0}^{m}(x-a_i)\right)\right) = \alpha_P(m)
  \end{equation*}
  holds. This can be seen by the following argument. Any choice of $m$
  consecutive elements of $(a_i)_{i\ge 0}$, for simplicity say $a_i$
  with $0\le i \le m-1$, contains exactly
  $\left\lfloor \frac{m}{q^j} \right \rfloor$ complete systems of
  residues modulo $q^j$ for all $j\ge 1$ (where $\lfloor\;.\;\rfloor$
  denotes the floor operator on $\QQ$).

  Note that $a_m$ is congruent to exactly
  $\left\lfloor \frac{m}{q^{j}}\right\rfloor$ elements modulo $P^j$
  for $j\ge 0$. In addition, $\max_{0 \le i \le m-1}\val(a_m-a_i) = n$, where
  $n\in \N$ with $q^n \le m < q^{n+1}$. Therefore,
  \begin{align*}
    \alpha_P(m)
    &=  \val_P\!\left(\prod_{i=0}^{m-1}(a_m - a_i)\right)
     = \sum_{j=1}^n |\{1\le i \le m-1 \mid \val(a_m - a_i) \ge j \}| \\
    &= \sum_{j\ge 1}\left\lfloor \frac{m}{q^j} \right \rfloor
     = \sum_{j=0}^{n-1}q^j
     = \alpha_P(q^n).
  \end{align*}
\end{remark}

\begin{definition}\label{def:gen-binom-poly}
  Let $D$ be a Dedekind domain and assume that $k\ge 1$ is an integer
  such that $\genfac_D(k) = cD$ is a principal ideal and $\calP$ be
  the (finite) set of prime ideals which contain $c$. Further, let
  $(a_i)_{i=0}^{k-1}$ be a sequence in $D$ such that, for each
  $P \in \calP$, $(a_i)_{i=0}^{k-1}$ is an initial sequence for a
  $P$-ordering. We call
  \begin{equation*}
   c^{-1} \prod_{i=0}^{k-1}(x-a_i)
  \end{equation*}
  a \emph{generalized binomial polynomial of degree $k$ over $D$}.
\end{definition}

We now turn our attention to generalized binomial polynomials of
degree $q^n$ where $q$ is the (finite) order of a maximal ideal $P$ of
a Dedekind domain and $n\in \N$.  Let $S$ be a choice of $q^n$
consecutive elements of a $P$-ordering $(a_i)_{i\ge 0}$ of $D$. It
follows from Remark~\ref{remark:p-ordering-properties} that $S$ is a
system of representatives of the residue classes of $P^n$. Therefore,
considered as subset of the discrete valuation domain $R = D_P$ with
maximal ideal $M = PD_P$, $S$ is a $(D_P, P_P)$-balanced set. By
Remark~\ref{rem:csr-equpoly}, its equalizing polynomial is
$\prod_{s\in S}(x-s)$. (Note that a segment of consecutive elements of
a $P$-ordering whose length is not a power of $q$ is not a balanced
set.)

In view of this discussion, the following assertion now follows from
Corollary~\ref{corollary:globalizing}.

\begin{corollary}\label{corollary:gen-binom}
  Let $D$ be a Dedekind domain and $P$ be a prime ideal of $D$ with
  finite index $q$ and let $n\in \N$ such that the generalized
  factorial $\genfac_D(q^n) = (c)$ is a principal ideal of $D$.

  Then, for every $P$-ordering $(a_i)_{i\ge 0}$, the polynomial
  \begin{equation*}
    c^{-1}\prod_{i=0}^{q^n-1}(x-a_i)
  \end{equation*}
  is absolutely irreducible in $\Int(D)$.
\end{corollary}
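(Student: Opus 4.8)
The plan is to deduce the statement from Corollary~\ref{corollary:globalizing}, applied to the given prime $P$. Thus the whole task reduces to checking its three hypotheses for the set $S = \{a_0, a_1, \dots, a_{q^n-1}\}$ consisting of the first $q^n$ terms of the $P$-ordering and for the polynomial $f = \prod_{i=0}^{q^n-1}(x-a_i)$. The combinatorial heart of the matter is the observation that the initial segment $a_0,\dots,a_{q^n-1}$ of a $P$-ordering is a complete set of representatives of $D/P^n$ (equivalently of $D_P/P_P^{\,n}$). I would either cite this from the theory of $P$-orderings or prove it by induction on $n$: the greedy minimization of the $P$-adic valuation of the product of differences forces each newly chosen element to fall into a residue class modulo the next power of $P$ that is not yet occupied. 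In particular, the $q^n$ elements of $S$ are pairwise distinct.

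From this I would read off that $S$ is $(D_P,P_P)$-balanced in the sense of Definition~\ref{definition:balanced}. Since the elements of $S$ are pairwise incongruent modulo $P^n$, each ball $s+P_P^{\,n}$ isolates its center from the rest of $S$, so the isolating radius satisfies $n_s\le n$. Conversely, because $S$ is a complete residue system modulo $P^n$ while $[P^{n-1}:P^n]=q\ge 2$, every residue class modulo $P^{n-1}$ contains exactly $q\ge 2$ elements of $S$; hence no ball of radius $n-1$ isolates its center, and $n_s=n$ for every $s\in S$. The $q^n$ disjoint balls $s+P_P^{\,n}$ are then precisely the cosets of $P_P^{\,n}$ and therefore cover $D_P$, which is exactly the balancedness condition (the case $n=0$ is trivial).

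Next I would identify the equalizing polynomial of $S$ together with a generator of the fixed divisor. Because all isolating radii $n_s$ are equal, the linear system determining the multiplicities in Definition~\ref{definition:equ-poly} is symmetric in the elements of $S$, so its unique solution is a constant vector; after the normalization built into the equalizing polynomial this forces $m_s=1$ for all $s$, i.e.\ $f=\prod_{s\in S}(x-s)$ is the equalizing polynomial of $S$. Finally, the fixed divisor of $f$ is the generalized factorial $\genfac_D(q^n)$, which by hypothesis equals the principal ideal $(c)$; thus $c$ is a generator of the fixed divisor of $f$ and we may take $\fixdiv_{f}=c$. With all three hypotheses verified, Corollary~\ref{corollary:globalizing} yields that $c^{-1}f$ is absolutely irreducible in $\Int(D)$.

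The step I expect to be the main obstacle is the identification of the \emph{global} fixed divisor of $f$ with the generalized factorial $\genfac_D(q^n)$: a $P$-ordering controls the distribution of the $a_i$ only at $P$, so making this precise requires the theory of generalized factorials, namely that the fixed divisor of the falling-factorial polynomial along a $P$-ordering computes $\genfac_D(q^n)$. A secondary delicate point is the claim that the equalizing polynomial has all multiplicities equal to one, which rests on the symmetry and non-singularity of the defining linear system; both of these I would isolate as lemmas before invoking Corollary~\ref{corollary:globalizing}.
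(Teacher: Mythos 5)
Your proposal follows the paper's own argument almost step for step: the paper likewise (i) invokes the fact that $q^n$ consecutive terms of a $P$-ordering form a complete system of residues modulo $P^n$ (Remark~\ref{remark:p-ordering-properties}), (ii) concludes that $S$ is $(D_P,P_P)$-balanced, (iii) observes that the all-ones vector is the unimodular solution of $A\x=e\one_n$ because the rows of the partition matrix are permutations of one another, so the equalizing polynomial is $\prod_{s\in S}(x-s)$ (Remark~\ref{rem:csr-equpoly}), and (iv) feeds all of this into Corollary~\ref{corollary:globalizing}. Steps (i)--(iii) of your write-up are fine and match the paper's reasoning exactly, including your symmetry argument for the multiplicities.

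The step you flag as the main obstacle --- identifying the global fixed divisor of $f=\prod_{i=0}^{q^n-1}(x-a_i)$ with $\genfac_D(q^n)$ --- deserves the suspicion you give it, and you cannot close it by citation: for a bare $P$-ordering the identity is false. A $P$-ordering constrains the $a_i$ only $P$-adically, so at another prime $Q$ one may have $\val_Q(\fixdiv(f))<\alpha_Q(q^n)$. Concretely, in $D=\Z$ with $P=2\Z$ and $n=2$, the sequence $0,1,6,3$ is an initial segment of a $2$-ordering, yet $f=x(x-1)(x-6)(x-3)$ satisfies $f(2)=8$, so $\fixdiv(f)=8\Z\subsetneq 24\Z=\genfac_\Z(4)$ and $c^{-1}f\notin\Int(\Z)$ for $c=24$. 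The paper does not close this gap either: Remark~\ref{remark:fixdiv} controls only the $P$-adic valuation of the fixed divisor, and the deduction of Corollary~\ref{corollary:gen-binom} silently substitutes $c$ for the generator $\fixdiv_f$ of the actual fixed divisor demanded by Corollary~\ref{corollary:globalizing}. The statement becomes correct if one either replaces $c$ by a generator of $\fixdiv(f)$ (assumed principal), or additionally requires $(a_i)$ to be an initial segment of a $Q$-ordering for every prime $Q$ containing $c$, as in Definition~\ref{def:gen-binom-poly}; with either repair your argument is complete and coincides with the paper's.
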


\begin{remark}
  Let $p$ be an integer prime number and $n\in \N$.
  \begin{enumerate}
  \item It follows by Corollary~\ref{corollary:gen-binom} that the
    (classical) binomial polynomial $\binom{x}{p^n}$ is absolutely
    irreducible in $\Int(\Z)$. Note that this is a special case of a
    result of Rissner and
    Windisch~\cite[Theorem~1]{Rissner-Windisch:2021:binom}, who have shown
    that $\binom{x}{m}$ is absolutely irreducible in $\Int(\Z)$ for
    all $m\in \N$.
  \item A new result which immediately follows from by
    Corollary~\ref{corollary:gen-binom} is that $\binom{x}{p^n}$ is
    absolutely irreducible in $\Int(\Z_{(p)})$ as well as in
    $\Int(\Z_{p})$, where $\Z_{(p)}$ denotes the localization of
    $\Z$ at $p$ and $\Z_{p}$ the $p$-adic integers.
  \end{enumerate}
\end{remark}

\frenchspacing
\bibliographystyle{amsplainurl}
\bibliography{biblio}

\end{document}